\begin{document}



\section{Introduction}
Percolation is a central model of statistical physics.  Recall that
performing a site percolation with parameter $p$ on a lattice means
that each site is chosen independently to be black (open) with
probability $p$ and white (closed) with probability $1-p$.  It is
well-known that site percolation on the regular triangular lattice
exhibits a phase transition at a critical point $p_c=1/2$: when
$p\leq p_c$ there is almost surely no infinite black connected
component, whereas when $p>p_c$ there is almost surely a unique
infinite black connected component.

Consider percolation on a planar lattice.  In the literature, given
an annulus in the lattice, the arm events are referred to the
existence of some number of disjoint paths (arms, see below for a
formal definition) crossing the annulus, the color of each path
(black or white) being prescribed.  These events are very useful for
studying critical and near-critical percolation, the so-called arm
exponents can be used to describe some fractal properties of
critical percolation (see \cite{r19,r17,r18}).

In this paper, we investigate the winding angles of the arms. The
motivation mainly came from \cite{r3}. In that paper, Beffara and
Nolin proved the existence of the monochromatic exponents, and the
monochromatic $j$-arm exponent is strictly between the polychromatic
$j$-arm and $(j+1)$-arm exponents. Their proof relied on analyzing
the winding angles of the monochromatic and polychromatic arms. They
believed that a central limit theorem should hold on the winding
angles but did not give the proof.

The winding angles are also interesting in their own rights. In
fact, the winding angles for several different planar models have
been studied in the literature (e.g., random walk \cite{r11},
Brownian motion \cite{r23,r10}, loop-erased random walk (LERW)
\cite{r14}, self-avoiding walk (SAW) \cite{r12}, radial
Schramm-Loewner evolution (SLE) \cite{r26,r9,r13}, see also Remark
\ref{rem1}).  In these models, from a macroscopic view, the
conformal invariance properties were extensively used to derive the
winding angle variance and CLT.

We focus on site percolation on the triangular lattice at
criticality. We will realize the triangular lattice with site (or
vertex) set $\mathbb{Z}^2$.  For a given $(x,y)\in\mathbb{Z}^2$, its
neighbors are defined as $(x\pm 1,y),(x,y\pm1),(x+1,y-1)$ and
$(x-1,y+1)$. Edges (bonds) between neighboring or adjacent sites
therefore correspond to vertical or horizontal displacements of one
unit, or diagonal displacements between two nearest sites along a
line making an angle of $135^{\circ}$ with the positive $x$-axis.
Let each site of $\mathbb{Z}^2$ be black or white with probability
$1/2$ independently of each other, and denote $P=P_{1/2}$ the
corresponding product probability measure on the set of
configurations.  We also represent the measure as a (black or white)
random coloring of the faces of the dual hexagonal lattice. Let us
mention that the results in this paper also hold for critical bond
percolation on $\mathbb{Z}^2$.

A path is a sequence of distinct sites connected by nearest neighbor
bonds.  The event that two sets of sites $X_1,X_2\subset
\mathbb{Z}^2$ are connected by a black path is denoted by
$X_1\leftrightarrow X_2$, and $X_1,X_2$ are connected by a black or
white path is denoted by $X_1\leftrightarrow_1 X_2$. Given a set $X$
of sites, let $\partial X$ denote the boundary of $X$ which contains
sites in $X$ that are adjacent to some site not in $X$.  A circuit
is a path which starts and ends at the same site and does not visit
the same site twice, except for the starting site.  For a circuit
$\mathcal {C}$, define
$$\overline{\mathcal {C}}:=\mathcal {C}\cup \mbox{ interior sites of }\mathcal {C}.$$
Let $\sigma=(\sigma_i)$ be a sequence of colors.  Given two circuits
$\mathcal {C}_1,\mathcal {C}_2$ such that $\overline{\mathcal
{C}_1}\subset \overline{\mathcal {C}_2}$, we say that $\mathcal
{C}_1$ is $\sigma$-connected to $\mathcal {C}_2$,  if there exist
$|\sigma|$ disjoint paths (\textbf{arms}) connecting $\mathcal
{C}_1$ and $\mathcal {C}_2$, ordered counterclockwise in a cyclic
way, and the color of the $i$-th path is $\sigma_i$. Denote this
event by $\mathcal {C}_1\leftrightarrow_{\sigma}\mathcal {C}_2$.

Define $||\textbf{x}||_{\infty}:=\max\{|x|,|y|\}$ for
$\textbf{x}=(x,y)\in \mathbb{Z}^2$.  For any $r\geq 0$, define the
square box of sites $B(r):=\{\textbf{x}\in
\mathbb{Z}^2:||\textbf{x}||_{\infty}\leq r\}$. For $0<n<m$, define
the annulus $$A(n,m):=B(m)\backslash B(n).$$  For a crossing arm
$\gamma$ in an annulus $A(n,m)$, we often consider $\gamma$ as a
continuous curve by connecting the neighbor sites with line segments
and assume the direction of $\gamma$ is from $\partial B(n)$ to
$\partial B(m)$. The \textbf{winding angle} of $\gamma$ is the
overall (algebraic) variation of the argument along it and is
denoted by $\theta(\gamma)$.

For a polychromatic configuration in the annulus $A(n,m)$ (i.e.,
with at least one arm of each color), it is easy to see that the
winding angles of the arms differ by at most $2\pi$.  In the
following, we fix a deterministic way to choose a unique arm
$\gamma_{n,m}$ and focus on $\theta(\gamma_{n,m})$,  since there is
essentially a unique winding angle from a macroscopic point of view.

For two positive functions $f$ and $g$, the notation $f\asymp g$
means that $f$ and $g$ remain of the same order of magnitude, in
other words that there exist two positive and finite constants $c_1$
and $c_2$ such that $c_1g\leq f\leq c_2 g$.

Now we give our main result in Theorem \ref{t1}, from which we see
that a crossing arm of a polychromatic configuration in a long
annulus looks like a random logarithmic spiral.
\begin{theorem}\label{t1}
Assume that $\sigma$ is alternating and $|\sigma|$ is even.  Let $l$
be the minimal number such that $|\partial B(l)|\geq |\sigma|$.  We
condition on the event $\partial B(l)\leftrightarrow_{\sigma}
\partial B(n),n>l$.  Let $\theta_n:=\theta(\gamma_{l,n})$, and $a_n:=\sqrt{Var[\theta_n]}$. Then
we have
\begin{equation*}
a_n\asymp \sqrt{\log n},~~n>l,
\end{equation*}
and under the conditional measure $P(\cdot|\partial
B(l)\leftrightarrow_{\sigma}
\partial B(n))$
\begin{equation*}
\frac{\theta_n}{a_n}\rightarrow_d N(0,1).
\end{equation*}
\end{theorem}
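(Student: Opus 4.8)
The plan is to decompose the winding angle over a geometric sequence of scales and then run a martingale central limit theorem, the percolation input being the standard toolbox for arm events: Russo--Seymour--Welsh estimates, quasi-multiplicativity, quasi-independence of events occurring in disjoint annuli, and the separation of arms (see \cite{r3,r18}). First I would fix a large integer $\rho$, set $n_k:=\rho^{k}$, and let $k$ range over the scales with $l\le n_k<n_{k+1}\le n$; there are $\asymp\log n$ of them. Splitting $\gamma_{l,n}$ along the circles $\partial B(n_k)$ and summing the windings of the successive pieces gives $\theta_n=\sum_k X_k$, where $X_k$ is the winding of $\gamma_{l,n}$ inside $A(n_k,n_{k+1})$. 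Although $X_k$ is not literally a function of the coloring in that annulus, separation of arms together with RSW shows that under the conditional measure it differs from the winding of a canonically chosen crossing arm of $A(n_k,n_{k+1})$ --- which \emph{is} local to that annulus --- by an error with a uniform exponential tail, and that the winding of that canonical arm itself has a uniform exponential tail:
\begin{equation*}
P\bigl(|X_k|\ge t \mid \partial B(l)\leftrightarrow_{\sigma}\partial B(n)\bigr)\le C e^{-ct},\qquad l\le n_k<n_{k+1}\le n,
\end{equation*}
with $C,c$ independent of $k,l,n$. This is the only estimate needed on a single annulus; it reflects the fact that an extra full turn forces an additional nested arm configuration and therefore costs a bounded factor in probability.

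Next I would extract three consequences of the toolbox. (a) \emph{Decorrelation across scales}: by iterating RSW one produces, with probability bounded below at each scale, ``separating'' circuit configurations that make the interior and exterior conditionally independent; hence the dependence between the parts of the curve in $A(n_k,n_{k+1})$ and in $A(n_{k'},n_{k'+1})$ decays geometrically in $|k-k'|$. (b) \emph{Symmetry}: the triangular lattice realized on $\mathbb{Z}^2$ is invariant under reflection in the line $\{y=x\}$, which is orientation-reversing, fixes each $B(r)$, negates the winding angle, and --- since $\sigma$ is alternating of even length --- maps the arm event to itself; consequently $\theta_n$ is symmetric in law up to the bounded ambiguity in the choice of arm, so $E[\theta_n]=O(1)$. (c) \emph{Non-degeneracy}: conditionally on a separated boundary arrangement one can, with uniformly positive probability, realize inside $A(n_k,n_{k+1})$ configurations whose arms wind by a definite amount of prescribed sign, so $\mathrm{Var}[X_k]\ge c>0$ for bulk scales. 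Combining the exponential tails with (a) and (c) yields $\mathrm{Var}[\theta_n]\asymp\log n$, i.e.\ $a_n\asymp\sqrt{\log n}$, while (b) gives $E[\theta_n]=o(a_n)$.

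For the central limit theorem I would use the Doob martingale of $\theta_n$. Let $\mathcal F_k$ be the $\sigma$-field generated by the coloring in $B(n_k)$ and set $Y_k:=E[\theta_n\mid\mathcal F_k]-E[\theta_n\mid\mathcal F_{k-1}]$, so that $\theta_n-E[\theta_n]=\sum_k Y_k$ is an exact sum of martingale differences and $\mathrm{Var}[\theta_n]=\sum_k E[Y_k^{2}]$. Because each $X_j$ is essentially local to scale $j$ and the scales decorrelate, $Y_k$ is well approximated by a variable local to scale $n_k$ and inherits a uniform exponential tail; this gives the conditional Lindeberg condition $a_n^{-2}\sum_k E\bigl[Y_k^{2}\mathbf{1}_{\{|Y_k|>\varepsilon a_n\}}\mid\mathcal F_{k-1}\bigr]\to 0$ for every $\varepsilon>0$. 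By the martingale central limit theorem it then remains to show
\begin{equation*}
\frac{1}{a_n^{2}}\sum_k E\bigl[Y_k^{2}\mid\mathcal F_{k-1}\bigr]\longrightarrow 1\qquad\text{in probability.}
\end{equation*}

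I expect this last convergence to be the main obstacle. The argument would be: (i) $E[Y_k^{2}\mid\mathcal F_{k-1}]$ depends, up to an error that is summable in $k$, only on the configuration of the arms as they enter $B(n_k)$, by using the separating circuits of (a) to decouple from the far exterior and the deep interior; (ii) for bulk scales this conditional variance converges, as $l$ and $n$ recede, to a deterministic constant $\sigma^{2}\in(0,\infty)$, uniformly in $l$ and $n$ --- a localization statement for the conditional measure of the same nature as the one underlying the multiple-armed IIC; and (iii) a weak law of large numbers for the weakly dependent array $\bigl(E[Y_k^{2}\mid\mathcal F_{k-1}]\bigr)_k$, after which the normalized sum concentrates at $\sigma^{2}\cdot(\#\,\text{scales})/a_n^{2}\to1$ (this also sharpens $a_n^{2}\asymp\log n$ to $a_n^{2}\sim\sigma^{2}\log_\rho n$). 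Steps (i) and (ii), and in particular keeping all error terms under control uniformly over the $\asymp\log n$ scales, are where essentially all of the percolation estimates are spent.
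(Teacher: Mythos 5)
Your overall architecture (split over geometric scales, control winding increments via arm exponents and RSW, run a martingale CLT) is the right shape and matches the paper's strategy at a high level, but one of your three ingredients for the McLeish step is a statement the paper explicitly declines to prove because it is not currently known how to prove it.

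The concrete gap is your step~(ii): you need $E[Y_k^2\mid\mathcal F_{k-1}]$ to converge, uniformly in the bulk, to a fixed \emph{deterministic} constant $\sigma^2$, and you then read off the sharpened asymptotic $a_n^2\sim\sigma^2\log_\rho n$. But the existence of such a deterministic asymptotic constant for $\mathrm{Var}[\theta_n]$ is precisely the content of the Wieland--Wilson conjecture $a_n^2=(6/|\sigma|^2+o(1))\log n$ discussed in Remark~1 of the paper, where the author states explicitly that this ``might be proved by conformal invariance and SLE approach'' but that ``it is still not clear how to prove it rigorously.'' The per-scale conditional variance is the winding-angle variance of a multiply-conditioned crossing in a fixed annulus, and there is no known scale-invariance or mixing statement at the discrete level that forces these quantities to converge to a single number; this is exactly why the paper introduces the auxiliary normalizer $h_n=(\sum_p E_q\Delta_p^2)^{1/2}$ from~(43) and proves $a_n=h_n+o(\sqrt{\log n})$ rather than $a_n\sim c\sqrt{\log n}$. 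You do not actually need (ii): to verify McLeish's condition~(2.3c) it suffices to show the weak law $a_n^{-2}\sum_k\bigl(E[Y_k^2\mid\mathcal F_{k-1}]-E[Y_k^2]\bigr)\to 0$ in probability (concentration around the \emph{mean}, not around a deterministic limit), which is what the paper's Lemma~7 establishes via the coupling argument; if you replace (ii)+(iii) by this single statement, the gap closes.

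A secondary, structural remark: you use the Doob martingale along the deterministic filtration $\mathcal F_k=\sigma(\text{coloring in }B(n_k))$, whereas the paper uses a filtration driven by random ``good faces'' found at stopping-time scales $m(p)$ (Lemmas~3 and 4). The good faces give an exact conditional-independence (Markov) structure: given a face configuration $\Theta_p$ and the arm event, the law in $B(n)\setminus\overline\Theta_p$ is completely determined and independent of what happens strictly inside $\overline\Theta_p$. With your deterministic $\mathcal F_k$, the interfaces crossing $\partial B(n_k)$ need not land in a well-separated arrangement, and you would still have to invoke the Strong Separation Lemma and a coupling argument to extract the quantitative decorrelation that you assert informally in~(a) and in the approximation of $Y_k$ by a local quantity. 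In other words, the work the paper does in Lemma~2 (coupling) and Lemma~5 (exponential tails for $\Delta_p$ uniformly under the global conditioning) is unavoidable under your deterministic-scale decomposition as well; the random scales simply make the place where you pay this cost cleaner. Your symmetry argument for $E[\theta_n]=O(1)$ and the non-degeneracy argument for a per-scale variance lower bound are essentially the same as Lemmas~8 and the lower bound in~(4) of Lemma~5, and are fine.
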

\begin{remark}
Following the conjecture made by Wieland and Wilson \cite{r13} (see
(\ref{e44}) below), it is expected that
$$a_n^2=\left(\frac{6}{|\sigma|^2}+o(1)\right)\log n~~\mbox{as }n\rightarrow \infty,$$ which might be proved by
conformal invariance and SLE approach.  Heuristically, one can
decompose a typical arm into a short path near the origin and for
which the winding angle contribution is of a smaller order than
$\sqrt{\log n}$ and a long path far from the origin and for which
the winding angle contribution can be approximated by the winding
angle of multiple (mutually-avoiding) SLE paths (for multiple SLE
paths, see Remark \ref{rem1}).  However, it is still not clear how
to prove it rigorously.  We will actually use another sequence
$h_n\sim a_n$ instead of $a_n$, for the expressions for $h_n$, see
(\ref{e43}).
\end{remark}
\begin{remark}
Our proof mainly relies on the Strong Separation Lemma and the
coupling argument in \cite{r4}.  These two ingredients may be
extended to the following more general case without too much work:
$\sigma$ is polychromatic and $\sigma$ either does not contain
neighboring white colors or does not contain neighboring black
colors (here we take the first and last elements of $\sigma$ to be
neighbors).  See Remark 7 in \cite{r2} and subsection 5.4 in
\cite{r4}. Thus Theorem \ref{t1} can also be extended to this case.
\end{remark}
\begin{remark}\label{rem1}
There exist some analogous results on the winding angles of various
random paths.  For the classic results on random walk and Brownian
motion, the interested reader is referred to a short survey
\cite{r27}.  We address some results concerning SLE as follows.  For
radial $SLE_{\kappa}$, Schramm \cite{r9} showed that the variance of
the winding angle of the radial $SLE_{\kappa}$ path truncated at
distance $\varepsilon$ from the origin grows like
$(\kappa+o(1))\log(1/\varepsilon)$ (see also \cite{r22}), a CLT was
proved simultaneously.  However, as the authors said in \cite{r13},
conditioned there are $k$ disjoint random paths in a long annulus,
there are few results about the windings compared with the one path
case. Conditioned on the event that there are $k$ mutually-avoiding
$SLE_{\kappa}$ paths crossing the annulus $A(1,R)$ of
$\mathbb{R}^2$, Wieland and Wilson \cite{r13} made the conjecture
that the winding angle variance of the paths is
\begin{equation}\label{e44}
\left(\frac{\kappa}{k^2}+o(1)\right)\log R~~\mbox{as
}R\rightarrow\infty.
\end{equation}
 In \cite{r14}, conditioned on the annulus $A(1,R)$ of
$\delta\mathbb{Z}^2$ has $2$ (resp. $3$) disjoint LERWs, Kenyon
showed that the winding angle of the paths has variance tending to
$(\frac{1}{2}+o(1))\log R$ (resp. $(\frac{2}{9}+o(1))\log R$) as
$R\rightarrow\infty$ while $\delta\rightarrow 0$ (see ``Remarks on
LERW'' in \cite{r13} about Kenyon's incorrect values). This confirms
the formula (\ref{e44}) in the cases of $\kappa=2$ and $k=2,3$,
since LERW converges to $SLE_{2}$ \cite{r28}.  We also note that in
section 8 and subsection 10.6 in \cite{r20}, using the method from
quantum gravity, Duplantier showed the formula \ref{e44}.  See also
\cite{r26} for the proof from Coulomb gas method.
\end{remark}

\emph{Idea of the proof.}
\begin{figure}
\begin{center}
\includegraphics[height=0.4\textwidth]{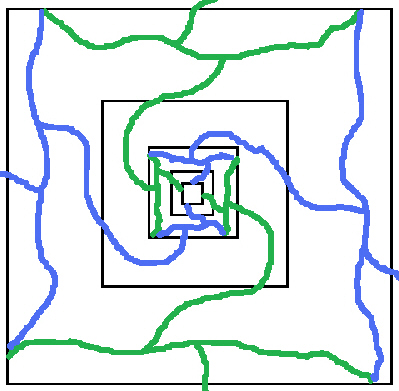}
\caption{We construct a sequence of Markovian good faces to get a
martingale structure of the winding angle.}\label{fig3}
\end{center}
\end{figure}
The strategy of the proof is similar to \cite{r1}. In that paper,
Kesten and Zhang constructed a sequence of black circuits
surrounding the origin in a Markovian way (the circuits could be
thought of as stopping times).  Using these circuits, they got a
martingale structure on the maximal number of disjoint black
circuits in a large box, and then they applied McLeish's CLT
\cite{r7} for the martingales. However, for our setting clearly we
can not use Markovian black circuits to get a martingale structure
of the winding angle, thanks to \cite{r4}, we can use faces instead.
As introduced in Section 3 of \cite{r4}, the faces are some type of
circuits which are composed of alternating color paths. With some
conditions added to the faces, we construct a sequence of good faces
to get a martingale structure of the winding angle. See Fig.
\ref{fig3}. Since we are considering conditional measure, it is hard
to estimate some events and check the conditions in McLeish's CLT.
Thanks to the coupling argument in \cite{r4}, we can get some weak
dependence of the faces and carry out the method from \cite{r1}. In
the proof of Theorem 1.6 in \cite{r2}, the authors used black
circuits with defects, we note that these circuits may not adapt to
the proof of our setting.

Let us give a direct corollary of Theorem \ref{t1} in the following.
First we introduce some definitions.  In the celebrated paper
\cite{r5}, Kesten gave the mathematical rigorous definition of the
incipient infinite cluster, which describes large critical
percolation clusters from the microscopic (lattice scale)
perspective \cite{r15} and the configuration at a ``typical
exceptional time'' of dynamical critical percolation \cite{r25}.
More precisely, let $\textbf{0}$ denote the origin, it is shown in
\cite{r5} that the limit
$$\nu(E):=\lim_{n\rightarrow\infty}P(E|\textbf{0}\leftrightarrow\partial B(n))$$
exists for any event $E$ that depends on the state of finitely many
sites in $\mathbb{Z}^2$.  The unique extension of $\nu$ to a
probability measure on configurations of $\mathbb{Z}^2$ exists and
we call $\nu$ the \textbf{incipient infinite cluster (IIC) measure}
or one-armed IIC measure.  Following Kesten's spirit, Damron and
Sapozhnikov introduced multiple-armed IIC measures in \cite{r2}.
Suppose that $\sigma$ is alternating and let $\ell$ be the minimal
number such that $|\partial B(l)|\geq |\sigma|$.  For every cylinder
event $E$, it is shown in Theorem 1.6 in \cite{r2} the limit
\begin{equation}\label{e20}
\nu_{\sigma}(E):=\lim_{n\rightarrow\infty}P(E|\partial
B(l)\leftrightarrow_{\sigma}\partial B(n))
\end{equation}
exists.  The unique extension of $\nu_{\sigma}$ to a probability
measure on the configurations of $\mathbb{Z}^2$ exists. We call
$\nu_{\sigma}$ the \textbf{${\sigma}$-IIC measure}.

\begin{corollary}\label{c1}
Suppose that $\sigma$ is alternating and let $l$ be the minimal
number such that $|\partial B(l)|\geq |\sigma|$. Suppose $a_n,n>l$
is the sequence defined in Theorem \ref{t1}.  Under
$P(\cdot|\partial B(l)\leftrightarrow \partial B(m)),m>n$ and
$\nu_{\sigma}$,  we define $\theta_n$ similarly as in Theorem
\ref{t1}.  Under the above measures, we have
\begin{equation*}
\frac{\theta_n}{a_n}\rightarrow_d N(0,1).
\end{equation*}
\end{corollary}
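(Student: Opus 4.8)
The plan is to deduce the corollary from Theorem \ref{t1} by transferring the CLT from the conditional measures $P(\cdot\,|\,\partial B(l)\leftrightarrow_{\sigma}\partial B(n))$ to the two target measures, namely $P(\cdot\,|\,\partial B(l)\leftrightarrow_{\sigma}\partial B(m))$ with $m>n$ and the $\sigma$-IIC measure $\nu_{\sigma}$. The key point is that $\theta_n=\theta(\gamma_{l,n})$ is, up to the deterministic choice rule for $\gamma_{l,n}$, a function of the configuration inside $B(n)$ only, i.e.\ essentially a cylinder event (more precisely, $\{\theta_n/a_n\le t\}$ can be approximated by cylinder events depending only on sites in $B(n)$). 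First I would fix $t\in\mathbb{R}$ at which the standard normal CDF $\Phi$ is continuous (all $t$, in fact) and write $F_n(t):=\Phi(t)$ as the target limit for $P(\theta_n/a_n\le t\,|\,\cdot)$ under each measure.

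For the measure $\nu_{\sigma}$, the argument is immediate from the definition (\ref{e20}): since $\{\theta_n/a_n\le t\}$ is (approximable by) a cylinder event,
\begin{equation*}
\nu_{\sigma}\bigl(\theta_n/a_n\le t\bigr)=\lim_{m\rightarrow\infty}P\bigl(\theta_n/a_n\le t\,\big|\,\partial B(l)\leftrightarrow_{\sigma}\partial B(m)\bigr).
\end{equation*}
So it suffices to handle the measures $P(\cdot\,|\,\partial B(l)\leftrightarrow_{\sigma}\partial B(m))$ uniformly in $m>n$, and then let $n\to\infty$ (and for $\nu_{\sigma}$, take $m\to\infty$ first, which commutes with the statement since the bound will be uniform in $m$). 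The heart of the matter is therefore a \emph{quasi-multiplicativity / separation} comparison: I would show that for every cylinder event $E$ depending on sites in $B(n)$,
\begin{equation*}
\Bigl|\,P\bigl(E\,\big|\,\partial B(l)\leftrightarrow_{\sigma}\partial B(m)\bigr)-P\bigl(E\,\big|\,\partial B(l)\leftrightarrow_{\sigma}\partial B(n)\bigr)\,\Bigr|\le \delta(n),
\end{equation*}
with $\delta(n)\rightarrow 0$ independent of $m$ and of $E$. This is exactly the type of estimate that underlies the existence of $\nu_{\sigma}$ in Theorem 1.6 of \cite{r2}: conditioning on the $\sigma$-arm event out to radius $m$ versus radius $n$ only affects the configuration in $B(n)$ through the arms reaching $\partial B(n)$, and the Strong Separation Lemma plus the coupling argument of \cite{r4} make the law of the configuration in $B(n)$ (given well-separated arm landing points on $\partial B(n)$) asymptotically insensitive to what happens outside. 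Combining this uniform comparison with Theorem \ref{t1} gives, for all $m>n$,
\begin{equation*}
\Bigl|\,P\bigl(\theta_n/a_n\le t\,\big|\,\partial B(l)\leftrightarrow_{\sigma}\partial B(m)\bigr)-\Phi(t)\,\Bigr|\le \Bigl|\,P\bigl(\theta_n/a_n\le t\,\big|\,\partial B(l)\leftrightarrow_{\sigma}\partial B(n)\bigr)-\Phi(t)\,\Bigr|+\delta(n),
\end{equation*}
and the right-hand side tends to $0$ as $n\rightarrow\infty$, uniformly in $m$; letting $m\to\infty$ recovers the $\nu_{\sigma}$ case.

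The main obstacle I expect is the uniform-in-$m$ comparison estimate $\delta(n)\to 0$: one must be careful that $\theta_n$, although determined by $B(n)$, is not a bounded cylinder event in the naive sense because the relevant arm $\gamma_{l,n}$ and its landing point on $\partial B(n)$ are themselves random, so the decoupling has to be applied conditionally on the (well-separated) arm endpoints on $\partial B(n)$ and then averaged. This is precisely where the Strong Separation Lemma and the coupling of \cite{r4} do the real work, and it is essentially the same mechanism that proves (\ref{e20}) exists; so in practice I would cite Theorem 1.6 of \cite{r2} (and the separation/coupling lemmas there and in \cite{r4}) to get $\delta(n)\to 0$, rather than reproving it. Once that uniform estimate is in hand, the rest is a routine $\veps$-chase combining it with Theorem \ref{t1}.
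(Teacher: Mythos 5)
Your overall strategy (transfer the CLT from $P(\cdot\,|\,\partial B(l)\leftrightarrow_{\sigma}\partial B(n))$ to the conditioning on radius $m>n$, uniformly in $m$, and then pass to $\nu_{\sigma}$ via (\ref{e20}) since $\{\theta_n\le t\,a_n\}$ is a cylinder event) is exactly the paper's plan, and the last step for $\nu_{\sigma}$ is correct. The gap is in the key estimate you rely on: you want a bound $\delta(n)\to 0$ \emph{uniform over all cylinder events $E$ depending on sites in $B(n)$} and over $m>n$, i.e.\ you want the total-variation distance between the two conditional laws restricted to $B(n)$ to vanish. This is stronger than what Theorem 1.6 of \cite{r2} gives (the IIC construction yields convergence for each \emph{fixed} cylinder event, with no uniformity over events living on the entire box $B(n)$ whose side itself goes to infinity), and it is in fact not expected to hold: near $\partial B(n)$ the two conditionings induce genuinely different laws (e.g.\ under the $m$-conditioning the arm landing pattern on $\partial B(n)$ is size-biased by the probability of continuing to $\partial B(m)$, and although separation lemmas make this Radon--Nikodym weight \emph{bounded} above and below, they do not make it close to $1$). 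So $\sup_{E\subset B(n)}\left|P(E\,|\,\cdots m)-P(E\,|\,\cdots n)\right|$ stays bounded away from $0$, and your $\delta(n)$ does not exist.

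The paper avoids this by coupling the two conditional measures only up to a strictly smaller radius, and then controlling the leftover winding. Concretely, with $2^{q+q^{1/3}+1}\le n\le 2^{q+q^{1/3}+2}$, it uses a version of Proposition 3.6 of \cite{r4} to produce, with probability $1-\exp(-c q^{1/3})$, identical well-separated exterior faces around $\partial B(2^{q+p})$ for some $0\le p\le q^{1/3}$ and identical configurations \emph{inside} those faces; the configurations in the outer annulus $A(2^{q+p},n)$ are \emph{not} matched. The difference $|\theta_{n,m}-\theta_{n,n}|$ is then supported in that annulus of logarithmic width $O(q^{1/3})$, and Lemma~\ref{l1} together with quasi-multiplicativity gives $P(|\theta_{n,m}-\theta_{n,n}|>q^{1/2-\veps})=o(1)$, which is $o(a_n)$ since $a_n\asymp\sqrt{q}$. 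So what your argument is missing is precisely this two-step structure: (i) couple only on $B(2^{q+p})$ with $2^{q+p}\ll n$, and (ii) bound the winding-angle contribution from $A(2^{q+p},n)$ by $o(a_n)$ using the exponential tail on the number of black crossings of a wedge (Lemma~\ref{l1}). The event $\{\theta_n/a_n\le t\}$ does depend on the configuration all the way out to $\partial B(n)$, so one cannot decouple it at scale $n$; one decouples at a slightly smaller scale and shows the remaining annulus contributes negligibly to $\theta_n$.
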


\begin{remark}
Under $P(\cdot|\partial B(l)\leftrightarrow \partial B(m))$ and
$\nu_{\sigma}$, using Lemma \ref{l1} and the coupling result in
Lemma \ref{l2}, it is not hard to check that
$\sqrt{Var(\theta_n)}=(1+o(1))a_n$.  We shall not give the proof
here, though.
\end{remark}

\begin{remark}
Following the spirit of \cite{r15,r13}, choosing a typical site from
the boundary (or external perimeter) of a large cluster or the
pivotal sites of a crossing event in a large box uniformly, we can
consider how the arms wind around the chosen site. Since it is
expected that the local measure viewed from the typical site
converges to the corresponding ${\sigma}$-IIC measure, one would
expect a CLT from this corollary. (For the existence of the limiting
measures, see Remark of Theorem 1 in \cite{r15}.  In the 4-arm case,
see also Remark 1.7 in \cite{r24} for the analog of the tightness
result in Theorem 8 in \cite{r15}.)
\end{remark}

For a monochromatic $\sigma$, there are many ways to select the arms
and the winding angles of these arms may differ a lot. Let
$j=|\sigma|$.  We denote by $I_{j,n}$ the set of all the winding
angles of the arms in the annulus $A(\ell,n)$, where $|\partial
B(l)|\geq j$.  Let
$\theta_{max,n}=\theta_{max,j,n}:=\max\{\alpha,\alpha\in I_{j,n}\}$
and $\theta_{min,n}:=\min\{\alpha,\alpha\in I_{j,n}\}$.  It is easy
to show (see \cite{r3}) that $\theta_{max,n}$ and $\theta_{min,n}$
are of order $\pm\log n$. Furthermore, by Proposition 7 in
\cite{r3}, if one sorts the elements of $I_{j,n}$ in increasing
order: $\alpha_1<\alpha_2<\cdots<\alpha_{|I_{j,n}|}$, then for every
$1\leq i\leq |I_{j,n}|-1$, $\alpha_{i+1}-\alpha_i<2\pi$.

In the 1-arm case, one can also get central limit theorems for
$\theta_{max,n}$ and $\theta_{max,n}-\theta_{min,n}$ by similar
methods for the proof of Theorem \ref{t1}.  Using good black
circuits and the coupling argument for the 1-arm case in \cite{r4},
the proof is similar and simpler, we leave it to the reader and just
give the following statements for $\theta_{max,n}$.

Under the conditional measure $P(\cdot|
\textbf{0}\leftrightarrow_{\sigma} \partial B(n))$ and the IIC
measure $\nu$ we both have
\begin{equation*}
E[\theta_{max,n}]\asymp\log n,~~Var[\theta_{max,n}]\asymp \log n,
\end{equation*}
and
\begin{equation*}
\frac{\theta_{max,n}-E[\theta_{max,n}]}{\sqrt{Var[\theta_{max,n}]}}\rightarrow_d
N(0,1).
\end{equation*}
In this paper, we only prove the alternating four arm case, since
the proof for this case applies to all cases that $\sigma$ is
alternating, with no essential changes. In general, we assume
$\sigma=(black,white,black,white)$ in the following.

Throughout this paper, $c,c_{1},c_{2},\ldots$ denote positive finite
constants that may change from line to line or page to page
according to the context.

\section{Preliminary results}\label{s1}
As remarked above, we focus on the alternating four arm case.
Firstly, following the terminology of \cite{r4}, let us introduce
some definitions. Suppose $\Gamma$ is the set of percolation
interfaces which cross the annulus $A=A(m,n)$. If there are $p\geq
2$ interfaces crossing $A$ and if $x_1,\ldots,x_p$ denote the
endpoints of these interfaces on $\partial B(n)$, define the
\textbf{quality}
$$Q(\Gamma):=\frac{1}{n}\inf_{k\neq l}|x_k-x_l|,$$
where $|\cdot|$ denotes Euclidean distance. If $\Gamma=\emptyset$,
we define $Q(\Gamma)=0$.

Let $x_1,\ldots,x_4$ be four midpoints of four distinct bonds in
$\partial B(n)$.  We will adopt here cyclic notation, i.e., for any
$i,j\in \mathbb{Z}$, we have $x_j=x_i$ if $j\equiv i \mod 4$.  For
any $i\in \mathbb{Z}$, let $\gamma_i$ be a simple path of hexagons
joining $x_i$ to $x_{i+1}$ and $\gamma_i\subset B(n)$ (here we see
$\gamma$ as a sequence of sites).  Assume $\gamma_i$ is black if $i$
is odd and white otherwise.  Then we call the circuit $\Theta$ which
is composed of these four paths a configuration of
\textbf{(interior) faces}, and say $\Theta$ are faces of $\partial
B(n)$.  Define the quality of a configuration of faces $Q(\Theta)$
to be the least distance between the endpoints (i.e.,
$x_1,\ldots,x_4$) , normalized by $n$.  Similar to the definition of
faces, we call a circuit around $\partial B(n)$ \textbf{exterior
faces}, if the circuit is composed of four alternating color paths
contained in $(\mathbb{Z}^2\backslash B(n))\cup \partial B(n)$ with
endpoints on $\partial B(n)$.  Note that our definition of exterior
faces is exactly the same as the definition of faces in \cite{r4}.
Similar to the quality of faces, we can also define quality of
exterior faces.

The following properties of arm events are well-known, see
\cite{r21,r17}. We assume that the reader is familiar with the
FKG-inequality, the Russo-Seymour-Welsh (RSW) technology.  See
\cite{r8,r18}.  Using FKG, RSW and Theorem 11 in \cite{r17}, the
statements related to faces can be easily obtained from the classic
results, the proof is omitted here.  Note that for general
alternating color sequence $\sigma$ with even $|\sigma|$, the
corresponding notion of faces can be defined, and analogous results
hold in this more general case.
\begin{enumerate}
\item A priori bound for arm events: For any color sequence $\sigma$, there
exist constants $c(|\sigma|),\beta(|\sigma|)>0$ such that for all
$n_1<n_2$,
\begin{equation}\label{e40}
P(\partial B(n_1)\leftrightarrow_{\sigma}\partial B(n_2))\geq
c\left(\frac{n_1}{n_2}\right)^{\beta}.
\end{equation}
Furthermore, given $\sigma=(black,white,black,white)$, for any faces
$\Theta$ of $\partial B(n_1)$ with $Q(\Theta)>\frac{1}{4}$,
\begin{equation}\label{e41}
P(\Theta\leftrightarrow_{\sigma}\partial B(n_2))\geq
c\left(\frac{n_1}{n_2}\right)^{\beta}.
\end{equation}
\item \textbf{Quasi-multiplicativity}: For any color sequence $\sigma$, there
is a constant $c(|\sigma|)>0$, such that for all $n_1<n_2<n_3$,
\begin{eqnarray*}
&&cP(\partial B(n_1)\leftrightarrow_{\sigma}\partial
B(n_2))P(\partial B(n_2)\leftrightarrow_{\sigma}\partial B(n_3))\leq
P(\partial B(n_1)\leftrightarrow_{\sigma}\partial B(n_3))\\
&&~~~~~~~~~~~~~~~~~~~~~~~~~~~~~~~~~~~~~~\leq P(\partial
B(n_1)\leftrightarrow_{\sigma}\partial B(n_2))P(\partial
B(n_2)\leftrightarrow_{\sigma}\partial B(n_3)).
\end{eqnarray*}
Furthermore, given $\sigma=(black,white,black,white)$, for any faces
$\Theta$ of $\partial B(n_1)$ with $Q(\Theta)>\frac{1}{4}$,
\begin{equation*}
P(\Theta\leftrightarrow_{\sigma}\partial B(n_2))P(\partial
B(n_2)\leftrightarrow_{\sigma}\partial B(n_3))\asymp
P(\Theta\leftrightarrow_{\sigma}\partial B(n_3)).
\end{equation*}
\end{enumerate}

Define $R(m,n):=\{z\in\mathbb{Z}^2:|\arg(z)|<\frac{\pi}{10}\}\cap
A(m,n)$. We say a path $\gamma\subset R(m,n)$ is a crossing path in
$R(m,n)$ if the endpoints of $\gamma$ lie adjacent (Euclidean
distance smaller than $\sqrt{2}$) to the rays of argument
$\pm\frac{\pi}{10}$ respectively. By step 3 of the proof of Theorem
5 in \cite{r3}, we obtain the following lemma.
\begin{lemma}\label{l1}
 Define event
$$\mathcal {B}:=\{\mbox{there exist at least
$K\log(n/m)$ disjoint black crossing paths in $R(m,n)$}\}.$$  There
exist constants $c_1,c_2>0$, such that for all $K>0$ and $n>m>0$,
\begin{equation*}
P(\mathcal {B})\leq c_1\exp[(-c_2K+c_1\log K)\log(n/m)].
\end{equation*}
In particular, there exist constants $c_3,K_0>0$, such that for all
$K>K_0$ and $n>m>0$,
\begin{equation*}
P(\mathcal {B})\leq c_1\exp[-c_3K\log(n/m)].
\end{equation*}
\end{lemma}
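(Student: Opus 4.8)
I would follow the multiscale scheme that the paper attributes to step~3 of the proof of Theorem~5 in \cite{r3}. The plan is to turn ``many disjoint black crossings of the long cone'' into ``many disjoint macroscopic black connections inside individual windows of bounded shape'', and then run a large deviation estimate over the $\asymp\log(n/m)$ essentially independent windows. Concretely, set $s_j:=2^jm$ for $j=0,1,\dots,L$ with $L\asymp\log(n/m)$, and for each $j$ let $W_j:=\{|\arg z|<\tfrac{\pi}{10}\}\cap A(s_{j-2},s_{j+1})$, a region of fixed aspect ratio at scale $2^jm$. Any black crossing path $\gamma$ of $R(m,n)$ must pass through the ray $\arg z=0$ (its endpoints are adjacent to the two rays $\arg z=\pm\tfrac{\pi}{10}$ and it stays inside $R(m,n)$, so one applies the intermediate value theorem along $\gamma$); let $z$ be the first such passage and pick $j$ with $s_{j-1}\le\|z\|_\infty<s_j$. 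The connected piece $\gamma'$ of $\gamma\cap\overline{W_j}$ containing $z$ is then forced to have Euclidean diameter $\gtrsim 2^jm$ — to leave $W_j$ it must either reach $\|\cdot\|_\infty=s_{j+1}$ (travelling $\ge s_{j+1}-s_j=2^jm$), or reach $\|\cdot\|_\infty=s_{j-2}$ (travelling $\ge s_{j-1}-s_{j-2}=2^{j-2}m$), or reach one of the rays (travelling $\ge\|z\|_\infty\sin\tfrac{\pi}{10}\gtrsim 2^jm$) — so $\gamma'$ is a macroscopic black connection inside $W_j$. For disjoint crossings these sub-arcs stay disjoint. Hence, if $\mathcal B$ holds with $N:=\lceil K\log(n/m)\rceil$ disjoint black crossings, then $N\le\sum_{j=1}^{L}\mathcal M_j$, where $\mathcal M_j$ denotes the maximal number of disjoint macroscopic black connections inside $W_j$.

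Next I would record the per-scale estimate and feed it into a union-of-profiles large deviation bound. The existence of a macroscopic black connection in $W_j$ is an increasing event whose probability is bounded away from $1$ uniformly in $j$ (standard from RSW), so iterating the van den Berg--Kesten inequality gives $P(\mathcal M_j\ge k)\le(1-c)^k$ for all $k\ge1$, with $c>0$ independent of $j$. Since $W_j$ and $W_{j'}$ are disjoint whenever $j'\ge j+4$, the variables $\{\mathcal M_j:j\equiv r\ (\mathrm{mod}\ 4)\}$ are mutually independent for each fixed $r\in\{0,1,2,3\}$. On $\mathcal B$ one of the four partial sums $\sum_{j\equiv r}\mathcal M_j$ is at least $N/4$, and summing over the possible profiles $(k_j)_{j\equiv r}$ of that class,
\begin{equation*}
P\Bigl(\sum_{j\equiv r}\mathcal M_j\ge N/4\Bigr)\le\sum_{s\ge N/4}\binom{s+L/4}{L/4}(1-c)^s,
\end{equation*}
where the binomial counts the ways to distribute $s$ connections among the $\asymp L/4$ windows of the class. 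Using $\binom{s+L/4}{L/4}\le\bigl(e(4s/L+1)\bigr)^{L/4}$ and $s\asymp KL$, the sum is at most $\exp[\tfrac{L}{4}(-c_2'K+c_1'\log K+c_3')]$; summing over the four classes, absorbing constants, and using $L\asymp\log(n/m)$ yields $P(\mathcal B)\le c_1\exp[(-c_2K+c_1\log K)\log(n/m)]$. Finally, choosing $K_0$ so large that $c_2K_0/2\ge c_1\log K_0$ gives the stated clean form $P(\mathcal B)\le c_1\exp[-c_3K\log(n/m)]$ for all $K>K_0$.

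The step I expect to be the real obstacle is the geometric localization in the first paragraph. A single crossing of $R(m,n)$ may wander over the entire radial range $[m,n]$ while only slowly rotating from one ray to the other, so it need not cross any individual dyadic sub-cone at all; decomposing the cone naively into sub-cones therefore fails, and the device of cutting each crossing at a neighbourhood of its \emph{first} passage through $\arg z=0$ is what extracts a usable macroscopic sub-arc living at a single scale. One has to check carefully that this sub-arc really is comparable in size to $W_j$, and that the assignment of crossings to scales preserves disjointness, but once this is in place the remaining ingredients — the RSW fact that a macroscopic black connection in a bounded-shape region has probability $\le 1-c$, the iterated BK inequality, and the crude profile count producing the $c_1\log K$ correction — are routine, and the whole argument is, as the text indicates, essentially step~3 of the proof of Theorem~5 in \cite{r3}.
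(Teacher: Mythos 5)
The paper gives no proof of Lemma~\ref{l1} beyond the pointer to step~3 of the proof of Theorem~5 in \cite{r3}, and your reconstruction — dyadic windows, anchoring each crossing at its first passage of the ray $\arg z=0$ to extract a scale-localized macroscopic sub-arc, BK/RSW per scale, and a large-deviation bound over the $\asymp\log(n/m)$ essentially independent scales — is exactly the standard multiscale argument underlying that reference, and it is correct. The only cosmetic remark is that the final step is cleaner via a direct Chernoff bound (the geometric tail gives $E[e^{\lambda\mathcal M_j}]\le C(\lambda)<\infty$ for $\lambda<-\log(1-c)$, yielding $\exp[(-c_2K+c_1)\log(n/m)]$ with no $\log K$ term), and, as in the lemma's statement itself, your absorption of constants tacitly needs $K$ bounded below --- which is the only regime that matters here.
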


Define $\mathcal
{C}_i:=\{z\in\mathbb{C}:-\frac{3\pi}{8}+\frac{i\pi}{4}<arg(z)<-\frac{\pi}{8}+\frac{i\pi}{4}\},1\leq
i\leq 8$.  Lemma \ref{l3} is the straightforward analog of Lemma 3.4
in \cite{r4} with a little modification. See Fig. \ref{fig1}. The
proof is exactly the same as the second proof of Lemma 3.4 in
\cite{r4}, see Figure 3.2 in \cite{r4} for the strategy.  We will
adopt cyclic notation in Lemma \ref{l3}, i.e., for any $i,j\in
\mathbb{Z}$, we have $\mathcal {C}_j=\mathcal {C}_i$ if $j\equiv i
\mod 8$.
\begin{figure}
\begin{center}
\includegraphics[height=0.4\textwidth]{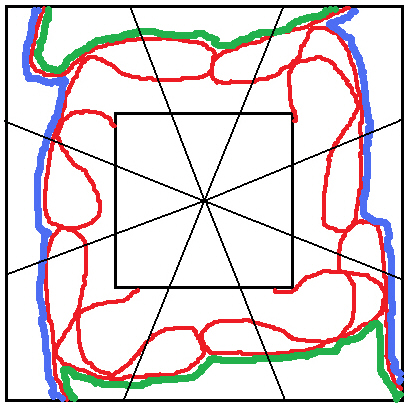}
\caption{Four interfaces crossing the annulus induce a natural
configuration of good faces.}\label{fig1}
\end{center}
\end{figure}
\begin{lemma}\label{l3}
In the annulus $A=A(n,2n)$, let $\mathcal {R}$ be the event that
there are exactly 4 disjoint alternating arms crossing $A$, and the
resulting 4 interfaces are contained respectively in $\mathcal
{C}_{i-1}\cup\mathcal {C}_{i}\cup\mathcal {C}_{i+1},i=2,4,6,8$, with
the endpoints of the interfaces on the two boundaries of $A$
belonging to $\mathcal {C}_i$. Then $P(\mathcal {R})>c$ for an
absolute constant $c>0$.
\end{lemma}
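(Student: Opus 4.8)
The plan is to reduce the statement to a fixed finite collection of monochromatic crossing events of curvilinear rectangles of bounded aspect ratio, estimate each one by RSW, and recombine them by independence; this is exactly the scheme of the second proof of Lemma~3.4 in \cite{r4} (see Figure~3.2 there) and of Step~3 of the proof of Theorem~5 in \cite{r3}. Concretely, I would first subdivide the annular sectors $\mathcal{C}_j\cap A(n,2n)$, $j=1,\dots,8$, into a bounded number of curvilinear rectangles --- annular sub-sectors and radial strips, each of aspect ratio bounded by an absolute constant --- and prescribe to each such piece a color together with a ``hard-way'' crossing (radial for those touching $\partial B(n)$ or $\partial B(2n)$). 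Define $\mathcal{E}$ to be the intersection of the associated crossing events. The geometric engine is the elementary topological observation that a black crossing and a white crossing of two adjacent rectangles sharing a side force a percolation interface to run along that common side; iterating this along the prescribed pattern produces four crossing interfaces and, if one also fills the complementary region monochromatically, no more.

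Next I would arrange the pattern so that $\mathcal{E}\subseteq\mathcal{R}$, which requires three features built into the picture. First, put monochromatic radial crossings in the four odd cones (say black in $\mathcal{C}_1,\mathcal{C}_5$, white in $\mathcal{C}_3,\mathcal{C}_7$): these supply the four disjoint alternating arms crossing $A$, and, being monochromatic annular sub-sectors, they block every further interface from crossing $A$, so that \emph{exactly} four interfaces --- hence exactly four alternating arms --- occur. Second, near each of $\partial B(n)$ and $\partial B(2n)$, inside the even cone $\mathcal{C}_i$, insert a short black/white ``landing'' gadget that ties the two colors together at a prescribed spot; this pins the endpoint of the $i$-th interface into $\mathcal{C}_i$. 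Third, flank each even-cone gadget on both angular sides by the monochromatic crossings sitting in the neighbouring cones, which traps the $i$-th interface in $\mathcal{C}_{i-1}\cup\mathcal{C}_i\cup\mathcal{C}_{i+1}$.

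Given such an $\mathcal{E}$, the lower bound $P(\mathcal{E})>c$ is quick: the black rectangles of the construction and the white rectangles can be chosen disjoint as sets of hexagons (the interfaces run between them), so the event that all black crossings occur and the event that all white crossings occur depend on disjoint sets of sites and are therefore independent; the former is an intersection of finitely many increasing crossing events of bounded-modulus rectangles contained in $A(n,2n)$ and the latter of finitely many decreasing such events, so by the FKG inequality and RSW each has probability bounded below uniformly in $n$. Multiplying the two bounds gives $P(\mathcal{R})\ge P(\mathcal{E})\ge c>0$ with $c$ absolute. The only real work --- and the step I expect to be the main obstacle --- is the combinatorial bookkeeping of the eight cones: drawing the finite configuration so that the colors glue up consistently around the annulus, every piece is a rectangle of bounded modulus, and all three features above hold simultaneously, the fiddliest point being the joint guarantee of ``exactly four'' crossing arms together with the endpoint localization into the even cones. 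Since none of the RSW estimates depend on $n$, the resulting constant $c$ is absolute, as claimed.
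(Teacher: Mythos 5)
You correctly identify the architecture the paper has in mind: a deterministic pattern of monochromatic crossings of bounded-modulus curvilinear rectangles, lower-bounded piece-by-piece via RSW, glued within each color class by FKG, and combined across the two color classes by site-disjointness (hence independence). This is exactly the scheme of the second proof of Lemma~3.4 in \cite{r4}, which is all the paper itself offers as a proof, so the approach matches.

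There is, however, a genuine error in one step, and it is not merely ``bookkeeping.'' You assert that the four monochromatic radial crossings in the odd cones, ``being monochromatic annular sub-sectors, block every further interface from crossing $A$, so that exactly four interfaces occur.'' This deduction is false. A monochromatic crossing is a path, not an impenetrable region (and making the whole sub-sector monochromatic would cost probability $2^{-\Theta(n^2)}$); more to the point, an additional pair of interfaces can cross the annulus entirely inside an even cone $\mathcal{C}_2$, between your black $\mathcal{C}_1$ crossing and your white $\mathcal{C}_3$ crossing, without ever touching an odd cone. Between a black arm and the next white arm the number of crossing interfaces is odd, and nothing in the odd-cone crossings alone forces it to be $1$ rather than $3$. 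So your construction as described gives ``at least four'' interfaces, not ``exactly four.'' The ``exactly four'' clause and the endpoint localization both have to be earned by the even-cone landing gadgets: near each boundary circle the prescribed black and white arcs in $\mathcal{C}_i$ must enclose (to within a topologically sealed pocket through which no annulus-crossing interface can escape) the whole annular arc of that boundary in $\mathcal{C}_i$, leaving a single black--white transition, which then pins down a unique interface endpoint; doing this on both boundaries and in all four even cones forces exactly one interface per even cone. You do flag the joint guarantee of ``exactly four'' plus endpoint localization as ``the fiddliest point'' at the end, but the shortcut you offer earlier does not accomplish it, and this is precisely the nontrivial content of the construction in Figure~3.2 of \cite{r4} that the lemma is leaning on.
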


Define
$$A(p):=A(2^{p},2^{p+1}),~~p\geq 0.$$
For each $A(p)$, if the event $\mathcal {R}$ (see the definition in
Lemma \ref{l3}) happens, then the four interfaces induce a natural
configuration of faces $\Theta\subset A(p)$ of $\partial
B(2^{p+1})$.  We call $\Theta$ \textbf{good faces}, and say there
are good faces in $A(p)$. See Fig. \ref{fig1}. $\Theta$ are composed
of four paths $\{\Theta(j)\}_{1\leq j\leq 4}$, where the path
$\Theta(j)\subset
\{z\in\mathbb{C}:(j-2)\frac{\pi}{2}-\frac{\pi}{8}<arg(z)<j\frac{\pi}{2}+\frac{\pi}{8}\}$.

\begin{lemma}\label{l2}
There is a constant $c_1>0$, such that for all $r=2^{p_0},R>10r$,
$0\leq t\leq \log_2(R/r)$ and any faces $\Theta$ of $\partial B(r)$,
\begin{equation}\label{e29}
P\left(\bigcap_{0\leq i\leq t}\{\mbox{there are no good faces in
}A(p_0+i)\}|\Theta\leftrightarrow_{\sigma}
\partial B(R)\right)\leq \exp(-c_1t).
\end{equation}
Furthermore, there exists a constant $c_2>0$, such that for any
$r_1=2^{p_1},r_2=2^{p_2}$ with $r_1\leq r_2<R/10$, given any faces
$\Theta_1$ of $\partial B(r_1)$ and faces $\Theta_2$ of $\partial
B(r_2)$, for any $0\leq t\leq \log_2(R/r_2)$, there is a
\textbf{coupling} of the measures
$P(\cdot|\Theta_1\leftrightarrow_{\sigma}\partial B(R))(\mbox{or }
P(\cdot|\partial B(1)\leftrightarrow_{\sigma}\partial B(R)))$ and
$P(\cdot|\Theta_2\leftrightarrow_{\sigma}\partial B(R))$ so that
with probability at least $1-\exp(-c_2t)$, there exists $0\leq i\leq
t$ such that $A(p_2+i)$ has identical good faces $\Theta_0$ for both
measures, and the configurations in the domain $B(R)\backslash
\overline{\Theta}_0$ are also identical.
\end{lemma}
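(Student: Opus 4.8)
The plan is to reduce both statements to a scale-by-scale analysis, carried out along an exploration of the four interfaces realizing the arm event, using three ingredients: the uniform lower bound for good faces in a dyadic annulus (Lemma~\ref{l3}), the a~priori bounds and quasi-multiplicativity \eqref{e40}--\eqref{e41}, and the Strong Separation Lemma of \cite{r4}. The structural fact driving the argument is that a configuration of good faces $\Theta_0$ in an annulus $A(p)$ acts as a \emph{cut}: on the event that $\Theta_0$ occurs together with $\Theta\leftrightarrow_\sigma\partial B(R)$, the arm event factorizes into $\{\Theta\leftrightarrow_\sigma\Theta_0\}$, measurable inside $\overline{\Theta}_0$, and $\{\Theta_0\leftrightarrow_\sigma\partial B(R)\}$, measurable outside; moreover, conditionally on the occurrence of $\Theta_0$ and on the arm event, the configuration in $B(R)\setminus\overline{\Theta}_0$ has law $P(\cdot\mid\Theta_0\leftrightarrow_\sigma\partial B(R))$ and is independent of everything inside $\overline{\Theta}_0$. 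Since good faces have quality bounded away from $0$, the face versions of \eqref{e41} and of quasi-multiplicativity apply to $\Theta_0$.

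For \eqref{e29}, I would explore the four interfaces outward from $\Theta$, letting $\mathcal F_i$ be the information revealed up to the first time all four have left $B(2^{p_0+i})$, so that $G_i:=\{A(p_0+i)\text{ carries good faces}\}$ is $\mathcal F_{i+1}$-measurable. Write $Q:=P(\cdot\mid\Theta\leftrightarrow_\sigma\partial B(R))$. I would show $Q(G_i\mid\mathcal F_i)\ge c_0$ for a constant $c_0>0$ and every $i$: conditionally on $\mathcal F_i$ the interfaces leave some (possibly poorly separated) points of $\partial B(2^{p_0+i})$, and an RSW/steering argument inside the single annulus $A(p_0+i)$---essentially the proof of Lemma~\ref{l3}, now run with arbitrary inner boundary data---gives $P(G_i\mid\mathcal F_i)\ge c$; by the cut property $P(G_i\cap\{\Theta\leftrightarrow_\sigma\partial B(R)\}\mid\mathcal F_i)\asymp\pi\,P(G_i\mid\mathcal F_i)$, where $\pi=P(\text{4-arm from }2^{p_0+i+1}\text{ to }R)$, while $P(\Theta\leftrightarrow_\sigma\partial B(R)\mid\mathcal F_i)\le C\pi$ by the quasi-multiplicativity upper bound; dividing yields the lower bound. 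Since each $G_i$ is $\mathcal F_{i+1}$-measurable, iterating the tower property gives $Q(\bigcap_{0\le i\le t}G_i^c)\le(1-c_0)^{t+1}\le\exp(-c_1t)$.

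For the coupling, I would run the exploration in parallel for $Q_1:=P(\cdot\mid\Theta_1\leftrightarrow_\sigma\partial B(R))$ (or $P(\cdot\mid\partial B(1)\leftrightarrow_\sigma\partial B(R))$) and $Q_2:=P(\cdot\mid\Theta_2\leftrightarrow_\sigma\partial B(R))$, first exploring the $Q_1$-interfaces out to $\partial B(r_2)$ so that both explorations reach the common scale $2^{p_2}$, and then process $A(p_2+i)$, $i=0,\dots,t$, one at a time until a match occurs. At $A(p_2+i)$ I would first invoke the Strong Separation Lemma: with probability close to $1$ the incoming interfaces of both measures are $\delta$-well-separated on $\partial B(2^{p_2+i})$. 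Conditionally on that, the laws under $Q_1$ and $Q_2$ of the pair $\zeta:=(\Theta_0,\text{ configuration in }A(p_2+i)\setminus\overline{\Theta}_0)$, restricted to $G_i$, are comparable up to a universal constant: factoring through the cut $\Theta_0$, the factor $P(\Theta_0\leftrightarrow_\sigma\partial B(R))$ and the outer-shell factor do not depend on the measure, the normalizers are both $\asymp P(\text{4-arm crossing of }A(2^{p_2+i},R))$ by quasi-multiplicativity, and the only remaining factor---the probability that the incoming interfaces connect to $\Theta_0$ inside $A(p_2+i)$---is, by RSW, comparable to a quantity depending on $\Theta_0$ alone, uniformly over $\delta$-well-separated incoming data. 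Two sub-probability measures each of mass $\ge c_0$ and mutually within a constant factor can be coupled to agree with probability $\ge c'>0$; on the agreement event both measures have the same good faces $\Theta_0$ in $A(p_2+i)$ and the same configuration in $A(p_2+i)\setminus\overline{\Theta}_0$, after which the cut property lets me couple the configurations in $A(2^{p_2+i+1},R)$ identically (both being governed there by $P(\cdot\mid\Theta_0\leftrightarrow_\sigma\partial B(R))$), which gives identity throughout $B(R)\setminus\overline{\Theta}_0$. If the step fails---the separation fails, or the coupling of $\zeta$ fails---I pass to $A(p_2+i+1)$; since each step succeeds with conditional probability at least some $c''>0$, the probability of failing all $t+1$ steps is at most $\exp(-c_2t)$.

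The step I expect to be the main obstacle is this single-scale comparability of the two ``cut laws'' for $\zeta$. It rests on two points to be established carefully: the Strong Separation Lemma of \cite{r4} is needed to replace arbitrary incoming interface data by $\delta$-well-separated data---with $\delta$ chosen small enough that separation fails with negligible probability yet fixed so that the RSW estimates below hold uniformly---and an RSW rerouting estimate is needed showing that from any $\delta$-well-separated quadruple on $\partial B(2^{p_2+i})$ the probability of producing a prescribed configuration of good faces in $A(p_2+i)$ is comparable to a quantity not depending on the quadruple. These are the cone-constrained analogues of the Strong Separation Lemma and the coupling argument of \cite{r4}; together with the bookkeeping that legitimizes the conditional-independence-across-a-cut assertions, the quasi-multiplicativity bounds \eqref{e40}--\eqref{e41} are exactly what control the small-probability conditionings throughout.
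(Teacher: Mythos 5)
Your proposal follows essentially the same route as the paper's, which in turn leans on the proofs of Propositions~3.1 and~3.6 of \cite{r4}. Both arguments proceed scale by scale, using the Strong Separation Lemma, the uniform lower bound of Lemma~\ref{l3}, quasi-multiplicativity, and the density comparison for the sites determined by the crossing interfaces conditioned on well-separated faces (the paper's~(\ref{e28}), which you call the comparability of the two cut laws), then iterate via the Markov/tower property. The one place your bookkeeping is looser than the paper's is in the proof of~(\ref{e29}). The paper does not work with single dyadic annuli: it sets $r_i=8^i r$ and $N=\lfloor t/8\rfloor$, explicitly applies the Strong Separation Lemma between $\partial B(r_i)$ and $\partial B(2r_i)$ to get quality~$>1/4$, and only then looks for good faces in the further-out annulus $A(4r_i,8r_i)$---the factor-$8$ spacing is what gives room for separation and keeps the density estimate clean. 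You instead try to compress all of this into a single dyadic annulus, asserting that ``an RSW/steering argument inside $A(p_0+i)$, essentially the proof of Lemma~\ref{l3} run with arbitrary inner boundary data, gives $P(G_i\mid\mathcal F_i)\ge c$''; but Lemma~\ref{l3} is an unconditional statement about $\mathcal R$ in a free annulus, and uniformity over incoming interface positions is precisely the Strong Separation phenomenon, as you yourself correctly flag when discussing the coupling step. The remedy is exactly what the paper does: leave a buffer of a few dyadic scales between the exploration frontier and the annulus in which you seek good faces, first separating via the Strong Separation Lemma and then applying the gluing argument. With that adjustment your argument for~(\ref{e29}) matches the paper's; your coupling argument and the initial exploration of the $Q_1$-interfaces out to the common scale $r_2$ (the paper's reference to Proposition~3.6 of \cite{r4}) are already as in the paper.
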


\begin{proof}
Since the proof is basically the same as for Proposition 3.1 in
\cite{r4} with little modifications for our setting, we only sketch
the proof and omit some details. First let us prove inequality
(\ref{e29}).

Without loss of generality, let $N=\lfloor t/8\rfloor\geq 1$. For
$0\leq i\leq N$, let $r_i=8^ir$. Now we sample under
$P(\cdot|\Theta\leftrightarrow \partial B(R))$ the set of interfaces
$\Gamma(r_i)$ which start from the four endpoints of $\Theta$ until
they reach radius $r_i$.  We proceed by induction on the scale
$r_i,i\geq 0$.  By the Strong Separation Lemma (Lemma 3.3 in
\cite{r4}, Lemma 6.2 in \cite{r2} is another version), we have
\begin{equation}\label{e7}
P(Q(\Gamma(2r_i)>1/4)|\Theta\leftrightarrow_{\sigma}\partial
B(R),\Gamma(r_i))>c_3.
\end{equation}
It can be checked that $\Theta$ plus $\Gamma(2r_i)$ induce
configurations of faces of $\partial B(2r_i)$, which is denoted by
$\Theta_i$.  For each $A(4r_i,r_{i+1})$, define
$$\mathcal {R}_i:=\{\mbox{there are good faces in
} A(4r_i,r_{i+1})\}.$$ Let $S_i$ be the union of all sites whose
color is determined by the crossing interfaces according to the
measure $P(\cdot|\Theta_i,\Theta_i\leftrightarrow_{\sigma}\partial
B(R))$. Let $S_i$ be a possible value for $S_i$ such that $\mathcal
{R}_i$ holds.  If $Q(\Gamma(2r_i))>1/4$, by Lemma \ref{l3}, using a
gluing technique, it can be showed (the same to the proof of (3.1)
in \cite{r4}) there is a universal constant $c>0$ such that
\begin{equation}\label{e28}
2^{-|S|}/c\leq
P(S_i=S|\Theta_i,\Theta_i\leftrightarrow_{\sigma}\partial B(R))\leq
c2^{-|S|}.
\end{equation}
Then we get
\begin{equation}\label{e8}
P(\mathcal {R}_i|\Theta_i,\Theta_i\leftrightarrow_{\sigma}\partial
B(R))\geq c_4P(\mathcal {R}_i)>c_5.
\end{equation}
Combining (\ref{e7}) and (\ref{e8}), we have
\begin{equation}\label{e30}
P(\mathcal {R}_i|\Theta\leftrightarrow_{\sigma}\partial
B(R),\Gamma(r_i))>c_3c_5.
\end{equation}
By (\ref{e30}), choosing $c_2$ appropriately, we get
\begin{eqnarray*}
&&P\left(\bigcap_{0\leq i\leq t}\{\mbox{there are no good faces in
}A(p_0+i)\}|\Theta\leftrightarrow_{\sigma}
\partial B(R)\right)\\
&&~~~~\leq P(\mathcal {R}_0^c|\Theta\leftrightarrow_{\sigma}
\partial B(R))P(\mathcal {R}_1^c|\Theta\leftrightarrow_{\sigma}
\partial B(R),\mathcal {R}_0^c)\ldots \\
&&~~~~~~~~~P(\mathcal {R}_{N-1}^c|\Theta\leftrightarrow_{\sigma}
\partial B(R),\mathcal {R}_0^c\mathcal {R}_1^c\ldots\mathcal {R}_{N-2}^c)\\
&&~~~~\leq P(\mathcal {R}_0^c|\Theta\leftrightarrow_{\sigma}
\partial B(R))\max_{\Gamma(r_1)}\{P(\mathcal {R}_1^c|\Theta\leftrightarrow_{\sigma}
\partial B(R),\Gamma(r_1))\}\ldots\\
&&~~~~~~~~~\max_{\Gamma(r_{N-1})}\{P(\mathcal
{R}_{N-1}^c|\Theta\leftrightarrow_{\sigma}
\partial B(R),\Gamma(r_{N-1}))\}\\
&&~~~~\leq \exp(-c_2t).
\end{eqnarray*}
Now let us prove the coupling result. Sampling the interfaces for
the measures $P(\cdot|\Theta_1\leftrightarrow_{\sigma}\partial
B(R))$ and $P(\cdot|\Theta_2\leftrightarrow_{\sigma}\partial B(R))$
by induction similarly to the above argument, using the Strong
Separation Lemma and (\ref{e28}), one can show the coupling result
with the strategy very similar to the proof of Proposition 3.1 in
\cite{r4}. We omitted the details here and refer the reader to
``Proof of Proposition 3.1, continued.'' in \cite{r4}. To couple
$P(\cdot|\partial B(1)\leftrightarrow_{\sigma}\partial B(R))$ and
$P(\cdot|\Theta_2\leftrightarrow_{\sigma}\partial B(R))$, one can
use an argument analogous to the proof of Proposition 3.6 in
\cite{r4}, the details are also omitted here.
\end{proof}
\begin{remark}
The coupling argument was introduced in \cite{r4}, which is a very
useful tool to gain weak independence of events.  The coupling
argument is based on the Strong Separation Lemma, which was first
proposed in \cite{r2} (see a broad overview for the strong
separation phenomenon in many planar statistical physics models in
Appendix A of \cite{r4}), and is an extension of Kesten's arm
separation lemma \cite{r21,r17}.
\end{remark}
Consider measure $P(\cdot|\partial
B(1)\leftrightarrow_{\sigma}\partial B(n))$, $n\geq
2^{q+q^{\frac{1}{3}}+2}$ (the exponent $1/3$ can be replaced with
any fixed positive constant which is smaller than $1/2$). Define for
$1\leq p\leq q$
$$m(p)=m(p,\omega):=\min\{t\in\{p,p+1,\cdots\}:\mbox{there exist good faces in }A(t)\}.$$
Define $$\mathcal {A}_q:=\{m(q)\leq q+q^{\frac{1}{3}}\}.$$ The
following lemma implies that when we go out of a box (or faces) to
search good faces, we can quickly find them with high probability.
\begin{lemma}\label{l9}
There exists constant $c_1>0$ such that for $1\leq p\leq q$, $0\leq
t\leq q+q^{\frac{1}{3}}-p$ and $n\geq 2^{q+q^{\frac{1}{3}}+2}$,
\begin{equation}\label{e18}
P(m(p)-p\geq t|\partial B(1)\leftrightarrow_{\sigma}\partial
B(n))\leq \exp(-c_1t).
\end{equation}
Furthermore, there exists constant $c_2>0$, such that for $1\leq
p_1< p\leq q$, given faces $\Theta_{p_1}$ of $\partial B(2^{p_1})$,
\begin{equation}\label{e25}
P(m(p)-p\geq t|\Theta_{p_1}\leftrightarrow_{\sigma}\partial
B(n))\leq \exp(-c_2t).
\end{equation}
Let $\Theta_0:=\partial B(1).$ In particular, there exists constant
$c_3>0$, such that for $0\leq p\leq q-1$, given faces $\Theta_p$ of
$\partial B(2^{p})$,
\begin{equation}\label{e19}
P(\mathcal {A}_q^c|\Theta_p\leftrightarrow_{\sigma}
\partial B(n))\leq \exp(-c_3q^{\frac{1}{3}}).
\end{equation}
\end{lemma}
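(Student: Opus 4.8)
The plan is to deduce all three bounds from inequality~(\ref{e29}) of Lemma~\ref{l2}, which is precisely an exponential tail bound for the number of consecutive annuli that may fail to contain good faces, under a conditioning of the form $P(\cdot\mid\Theta\leftrightarrow_{\sigma}\partial B(r))$ with $\Theta$ a configuration of faces \emph{of} $\partial B(r)$. The only point requiring care is that in (\ref{e18}) and (\ref{e25}) the conditioning is imposed at a scale strictly below $2^{p}$ (the search scale), so it must first be transported to a face-crossing conditioning at scale exactly $2^{p}$, in order that (\ref{e29}) can be applied with $r=2^{p}$ and $R=n$.

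To begin, observe that, by the definition of $m(p)$, the event $\{m(p)-p\ge t\}$ is exactly the event that none of the annuli $A(p),A(p+1),\dots,A(p+t-1)$ contains good faces, and that for each $i\ge0$ whether $A(p+i)$ contains good faces depends only on the configuration in $A(2^{p+i},2^{p+i+1})$, hence --- up to a harmless boundary layer, absorbed by the conventions of \cite{r4} --- only on the configuration outside $B(2^{p})$. Now fix a search scale $p$ and a base scale $p_{1}$ with $0\le p_{1}<p\le q$, together with a configuration of faces $\Theta_{p_{1}}$ of $\partial B(2^{p_{1}})$; for (\ref{e18}) take $p_{1}=0$ and $\Theta_{0}=\partial B(1)$. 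Working under $P(\cdot\mid\Theta_{p_{1}}\leftrightarrow_{\sigma}\partial B(n))$, sample the four interfaces issuing from the endpoints of $\Theta_{p_{1}}$ and follow them outwards until they first reach radius $2^{p}$ (for $p_{1}=0$, that is, under the conditioning $\partial B(1)\leftrightarrow_{\sigma}\partial B(n)$, this is done as in Proposition~3.6 of \cite{r4}); call $\Gamma$ the collection of interfaces thereby revealed. As in the proof of Lemma~\ref{l2} (and of Proposition~3.1 in \cite{r4}), conditionally on $\Gamma$ the faces $\Theta_{p_{1}}$ together with $\Gamma$ induce a configuration of faces $\widetilde{\Theta}=\widetilde{\Theta}(\Gamma)$ of $\partial B(2^{p})$ --- possibly of small quality, which is permitted in (\ref{e29}) --- and the conditional law of the configuration in $B(n)\backslash\overline{\widetilde{\Theta}}$ is $P(\cdot\mid\widetilde{\Theta}\leftrightarrow_{\sigma}\partial B(n))$. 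Since $n\ge2^{q+q^{1/3}+2}$ and $t\le q+q^{1/3}-p$, the hypotheses $R>10r$ and $t-1\le\log_{2}(R/r)$ of (\ref{e29}) are satisfied with $r=2^{p}$ and $R=n$ whenever $t\ge2$, so for $t\ge2$ applying (\ref{e29}) to the block $A(p),\dots,A(p+t-1)$ gives
\begin{equation*}
P\bigl(m(p)-p\ge t\,\big|\,\widetilde{\Theta}\leftrightarrow_{\sigma}\partial B(n)\bigr)\le\exp\bigl(-c(t-1)\bigr).
\end{equation*}
For $t\le1$ the claim is immediate --- the case $t=0$ is trivial, and the case $t=1$ follows from the uniform positive lower bound, furnished by the Strong Separation Lemma together with the gluing argument of Lemma~\ref{l2}, for the conditional probability that $A(p)$ contains good faces given $\widetilde{\Theta}\leftrightarrow_{\sigma}\partial B(n)$ (here one uses $p_{1}<p$). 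Averaging over $\Gamma$ then yields (\ref{e25}) and, with $p_{1}=0$, (\ref{e18}), with suitable constants $c_{1}$ and $c_{2}$.

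For (\ref{e19}): since $m(q)-q$ is a nonnegative integer, $\mathcal{A}_{q}^{c}=\{m(q)>q+q^{1/3}\}\subseteq\{m(q)-q\ge\lfloor q^{1/3}\rfloor\}$. Given a configuration of faces $\Theta_{p}$ of $\partial B(2^{p})$ with $0\le p\le q-1$ (with $\Theta_{0}=\partial B(1)$), we apply (\ref{e25}) --- now established --- with search scale $q$ in place of $p$, base scale $p$, and $t=\lfloor q^{1/3}\rfloor$, which obeys $t\le q+q^{1/3}-q$; this gives
\begin{equation*}
P\bigl(\mathcal{A}_{q}^{c}\,\big|\,\Theta_{p}\leftrightarrow_{\sigma}\partial B(n)\bigr)\le\exp\bigl(-c_{2}\lfloor q^{1/3}\rfloor\bigr)\le\exp\bigl(-c_{3}q^{1/3}\bigr),
\end{equation*}
which is (\ref{e19}).

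The main obstacle is the transport step in the second paragraph: making rigorous that conditioning on the interfaces revealed out to radius $2^{p}$ rewrites $P(\cdot\mid\Theta_{p_{1}}\leftrightarrow_{\sigma}\partial B(n))$ as a mixture, over genuine face configurations $\widetilde{\Theta}$ of $\partial B(2^{p})$ (degenerate ones included), of the measures $P(\cdot\mid\widetilde{\Theta}\leftrightarrow_{\sigma}\partial B(n))$, with the exterior configuration correctly decoupled from the interior. This is precisely the interface-exploration and face-formation machinery of \cite{r4}, built on the Strong Separation Lemma; as with Lemma~\ref{l2}, it should be invoked rather than reproved. Everything else --- verifying the hypotheses of (\ref{e29}) in the range we need, handling $t\le1$, and the bookkeeping of constants --- is routine.
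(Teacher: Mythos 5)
Your argument for (\ref{e25}) is sound: when the conditioning is on a genuine faces configuration $\Theta_{p_1}$ with $p_1\geq 1$, the four interfaces issuing from its endpoints are forced to cross to $\partial B(n)$, so stopping them at radius $2^p$ does produce a faces configuration $\widetilde\Theta$ of $\partial B(2^p)$, and (\ref{e29}) applies directly. This is in fact a slightly more streamlined route than the paper indicates.

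The gap is in (\ref{e18}), the $p_1=0$ case. When the conditioning is $\partial B(1)\leftrightarrow_{\sigma}\partial B(n)$, $\partial B(1)$ is not a faces configuration and there is no blocking circuit; conditionally on this event the annulus $A(1,2^p)$ may be crossed by more than four disjoint arms (equivalently, more than four interfaces), in which case stopping the exploration at radius $2^p$ does not induce a faces configuration of $\partial B(2^p)$ at all. Proposition 3.6 of \cite{r4} does not rescue this: it is a coupling statement asserting that faces exist at \emph{some} nearby scale with high probability, not that interfaces can be stopped at a prescribed radius $2^p$ to produce faces. Worse, even if you made the argument conditional on the ``no extra arm up to radius $2^p$'' event, the complementary event has probability, via Reimer, bounded by $P(\partial B(1)\leftrightarrow_1 \partial B(2^p))$, which is a constant in $t$ and so cannot be absorbed into $\exp(-c_1 t)$. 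The paper resolves exactly this by splitting at the \emph{halfway} scale $2^{p+\lfloor t/2\rfloor}$: the extra-arm term is then controlled by $P(\partial B(1)\leftrightarrow_1\partial B(2^{p+\lfloor t/2\rfloor}))$, which (since $p\geq 1$) decays exponentially in $t$, while on the complementary event the crossing interfaces form faces of $\partial B(2^{p+\lfloor t/2\rfloor})$ and (\ref{e29}) is applied over the remaining $\sim t/2$ annuli. You need both the Reimer/extra-arm argument and the scale-dependent splitting to make (\ref{e18}) go through. (Relatedly, your deduction of (\ref{e19}) from (\ref{e25}) silently uses the excluded case $p_1=0$ when $p=0$; that case is covered by (\ref{e18}), which the paper explicitly combines with (\ref{e25}) for this step.)
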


\begin{proof}
If there exist 4 alternating arms from $\partial B(1)$ to $\partial
B(2^{p+\lfloor t/2\rfloor})$ and there is no extra disjoint arm, the
crossing interfaces between $\partial B(1)$ to $\partial
B(2^{p+\lfloor t/2\rfloor})$ induce faces $\Theta$ of $\partial
B(2^{p+\lfloor t/2\rfloor})$.  So we look at whether there is an
extra arm or not, and if not, we can condition on faces $\Theta$.
This means
\begin{eqnarray*}
&&P(m(p)-p\geq t|\partial B(1)\leftrightarrow_{\sigma}\partial
B(n))\\
&&\leq \frac{P(\partial B(1)\leftrightarrow_1\partial B(2^{p+\lfloor
t/2\rfloor})\Box
\partial B(1)\leftrightarrow_{\sigma}
\partial B(n))}{P(\partial B(1)\leftrightarrow_{\sigma} \partial
B(n))}\\
&&~~+\sum_{\Theta}P(\Theta|\partial B(1)\leftrightarrow_{\sigma}
\partial B(n))P(m(p+\lfloor t/2\rfloor)-p-\lfloor t/2\rfloor\geq t/2|\Theta,\Theta \leftrightarrow_{\sigma}\partial
B(n)).
\end{eqnarray*}
Using Reimer's inequality \cite{r6}, RSW and Lemma \ref{l2},
choosing $c_1,c_4>0$ appropriately,  we get
\begin{eqnarray*}
&&P(m(p)-p\geq t|\partial B(1)\leftrightarrow_{\sigma}\partial
B(n))\\
&&\leq P(\partial B(1)\leftrightarrow_1\partial B(2^{p+\lfloor
t/2\rfloor})+\sum_{\Theta}P(\Theta|\partial
B(1)\leftrightarrow_{\sigma}
\partial B(n))\exp(-c_4t)\\
&&\leq \exp(-c_1t).
\end{eqnarray*}
The proof of (\ref{e25}) is similar and simpler, we leave it to the
reader.  Applying (\ref{e18}) and (\ref{e25}), we obtain (\ref{e19})
immediately.
\end{proof}

\section{Proof of theorem}\label{s2}

As remarked in the Introduction, we focus on the alternating 4-arm
case throughout this section.  Let $\Theta_p$ denote the good faces
in $A(m(p))$, $1\leq p\leq q$. Recall $\Theta_0=\partial B(1)$.
Define
$$\mathcal {F}_p:=\mbox{$\sigma$-field generated by
}\{0,1\}^{\overline{\Theta}_p\backslash \textbf{0}},~~1\leq p\leq
q.$$

Let $\mathcal {F}_0$ be the trivial $\sigma$-field.

Define

$$P_q(\cdot)=P_{q,n}(\cdot):=P(\cdot|\mathcal {A}_q,\partial B(1)\leftrightarrow_{\sigma}\partial B(n)),$$
$$E_q(\cdot)=E_{q,n}(\cdot):=E(\cdot|\mathcal {A}_q,\partial B(1)\leftrightarrow_{\sigma}\partial B(n)).$$

Let us concentrate on the measure $P_q$ in the following.

For $1\leq p_1<p_2\leq q$, we define the winding angle between good
faces $\Theta_{p_1}$ and $\Theta_{p_2}$ as follows.  If
$\Theta_{p_1}=\Theta_{p_2}$ (note that this happens when $m(p_1)=
m(p_2)$, which is possible), let
$\theta(\Theta_{p_1},\Theta_{p_2})=0$. Now suppose
$\Theta_{p_1}\neq\Theta_{p_2}$ and
$\gamma([0,1])\subset(\overline{\Theta}_{p_2}\backslash\overline{\Theta}_{p_1})\cup\Theta_{p_1}
$ is a black or white path connecting $\Theta_{p_1}$ and
$\Theta_{p_2}$. Recall the definition of $\Theta(j)$ which is
defined after the definition of good faces.  Assume the starting
point $\gamma(0)\in \Theta_{p_1}(k_1)$ and the endpoint
$\gamma(1)\in \Theta_{p_2}(k_2)$, $1\leq k_1,k_2\leq 4$. We connect
$\gamma'(0):=2^{m(p_1)+1}\exp(i(k_1-1)\frac{\pi}{2})$ and
$\gamma(0)$ by the segment $\overline{\gamma'(0)\gamma(0)}$ and
connect $\gamma(1)$ and
$\gamma'(1):=2^{m(p_2)+1}\exp(i(k_2-1)\frac{\pi}{2})$ by
$\overline{\gamma(1)\gamma'(1)}$. Then we construct a new path
$\gamma':=\overline{\gamma'(0)\gamma(0)}\gamma\overline{\gamma(1)\gamma'(1)}$.
\begin{figure}
\begin{center}
\includegraphics[height=0.4\textwidth]{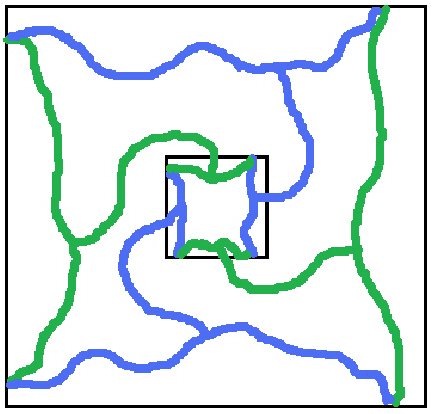}
\caption{Denote the good faces in this figure by $\Theta_1$ and
$\Theta_2$. It is clear that
$\theta(\Theta_1,\Theta_2)=\frac{\pi}{2}$.}\label{fig2}
\end{center}
\end{figure}
Define
$$\theta(\Theta_{p_1},\Theta_{p_2}):=\theta(\gamma'),~~1\leq
p_1<p_2\leq q.$$ See Fig. \ref{fig2}. By the definition of good
faces, it is easy to see that $\theta(\cdot,\cdot)$ is well defined
and independent of the choice of $\gamma$.

Recall $\Theta_0=\partial B(1)$.  For $1\leq p\leq q$, now we define
$\theta(\Theta_0,\Theta_{p})$ similarly.  Among the black or white
paths connecting $\Theta_0$ and $\Theta_{p}$ in
$\overline{\Theta}_{p}\backslash \textbf{0}$, we choose the unique
one in some definite way, and denote it by $\gamma$. Assume the
endpoint $\gamma(1)\in \Theta_{p}(k),1\leq k\leq 4$. Then we connect
$\gamma(1)$ and $\gamma'(1):=2^{m(p)+1}\exp(i(k-1)\frac{\pi}{2})$ by
$\overline{\gamma(1)\gamma'(1)}$.  Let $\gamma'(0)=\gamma(0)$. Then
we construct a new path
$\gamma':=\gamma\overline{\gamma(1)\gamma'(1)}$.  Define
$$\theta(\Theta_0,\Theta_{p}):=\theta(\gamma'),~~1\leq p\leq q.$$

By a simple topological argument, we get
\begin{equation}\label{e21}
\theta(\Theta_0,\Theta_q)=\sum_{i=0}^{q-1}\theta(\Theta_i,\Theta_{i+1}).
\end{equation}
Define for $1\leq p\leq q$
\begin{equation}\label{e22}
\Delta_p=\Delta_{p,q,n}:=E_q(\theta(\Theta_0,\Theta_q)|\mathcal
{F}_p)-E_q(\theta(\Theta_0,\Theta_q)|\mathcal {F}_{p-1}).
\end{equation}
Thus
$$\theta(\Theta_0,\Theta_q)-E_q(\theta(\Theta_0,\Theta_q))=\sum_{p=1}^{q}\Delta_p.$$
See Fig. \ref{fig3} for an illustration.

Similarly to \cite{r1}, we have to become more specific about the
probability space. Set $\Omega=\{black,white\}^{\mathbb{Z}^2}$. The
$\sigma$-field $\mathscr{B}$ is generated by the cylinder sets of
$\Omega$. Recall the notation $P(\cdot)$ we introduced earlier, the
underlying probability is just $(\Omega,\mathscr{B},P)$.  Let
$(\Omega',\mathscr{B}',P')$ be a copy of $(\Omega,\mathscr{B},P)$.
We need some cumbersome but unavoidable expressions in the
following. For example, to determine
\begin{equation}\label{e42}
\theta(\Theta_{p}(\omega),\Theta_q(\omega'))(\omega')
\end{equation}
in Lemma \ref{l4} below one first determines $\Theta_{p}(\omega)$ in
the configuration $\omega\in\Omega$ under
$(\Omega,\mathscr{B},P_q)$, and then $\Theta_q(\omega')$ in the
configuration $\omega'\in\Omega'$ under
$(\Omega',\mathscr{B}',P'(\cdot|\mathcal
{A}_q,\Theta_{p}(\omega)\leftrightarrow_{\sigma}
\partial B(n))$ (note that these two conditional laws are different).  With faces $\Theta_{p}(\omega)$ and
$\Theta_q(\omega')$ fixed, one can determine (\ref{e42}) in the
configuration $\omega'$. For convenience, we define
$$P'_{q,\Theta_{p}(\omega)}(\cdot)=P'_{q,n,\Theta_{p}(\omega)}(\cdot):=P'(\cdot|\mathcal {A}_q,\Theta_{p}(\omega)\leftrightarrow_{\sigma}\partial B(n)),$$
$$E'_{q,\Theta_{p}(\omega)}(\cdot)=E'_{q,n,\Theta_{p}(\omega)}(\cdot):=E'(\cdot|\mathcal {A}_q,\Theta_{p}(\omega)\leftrightarrow_{\sigma}\partial B(n)).$$

It will also be necessary to introduce a copy
$(\Omega'',\mathscr{B}'',P''_{q,\Theta_{p}(\omega)})$ of
$(\Omega',\mathscr{B}',P'_{q,\Theta_{p}(\omega)})$.  The element of
$\Omega''$ is denoted by $\omega''$ and expectation with respect to
$P''_{q,\Theta_{p}(\omega)}$ is denoted by
$E''_{q,\Theta_{p}(\omega)}$.  The following lemma is the analog of
(2.11) in \cite{r1}.  However, we can not get the analogous results
corresponding to (2.12) and Lemma 2 in \cite{r1}: in [17], exact
independence can be revealed, which is not possible here due to the
``global'' conditioning.
\begin{lemma}\label{l4}
We have
\begin{eqnarray*}
&&\Delta_p(\omega)=\theta(\Theta_{p-1}(\omega),\Theta_p(\omega))(\omega)+E_{q,\Theta_{p}(\omega)}''\theta(\Theta_{p}(\omega),\Theta_q(\omega''))(\omega'')\\
&&~~~~~~~~~~~-E_{q,\Theta_{p-1}(\omega)}'\theta(\Theta_{p-1}(\omega),\Theta_q(\omega'))(\omega'),~~1\leq
p\leq q.
\end{eqnarray*}
\end{lemma}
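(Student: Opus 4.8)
The plan is to reduce the statement to two facts: the additivity of the winding angle along nested good faces (the topological argument behind (\ref{e21})), and the domain Markov property of i.i.d.\ percolation applied to the stopped exploration that produces $\overline{\Theta}_r$. Once these are in place, $\Delta_p$ unfolds by a short computation.

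First I would record the additivity identity at the scales $0,p-1,p,q$. For $0\le r\le s\le q$ one has $\overline{\Theta}_r\subseteq\overline{\Theta}_s$, because $m(r)\le m(s)$, and the same topological argument that yields (\ref{e21}) gives
\[
\theta(\Theta_0,\Theta_s)=\theta(\Theta_0,\Theta_r)+\theta(\Theta_r,\Theta_s).
\]
Taking here $(r,s)=(p,q)$ and $(r,s)=(p-1,q)$, together with $\theta(\Theta_0,\Theta_p)=\theta(\Theta_0,\Theta_{p-1})+\theta(\Theta_{p-1},\Theta_p)$, and noting that $\theta(\Theta_0,\Theta_p)$ and $\theta(\Theta_{p-1},\Theta_p)$ are $\mathcal F_p$-measurable while $\theta(\Theta_0,\Theta_{p-1})$ is $\mathcal F_{p-1}$-measurable, I would form the conditional expectations $E_q(\theta(\Theta_0,\Theta_q)|\mathcal F_p)$ and $E_q(\theta(\Theta_0,\Theta_q)|\mathcal F_{p-1})$, pull the measurable summands outside, and subtract according to (\ref{e22}). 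This gives
\[
\Delta_p=\theta(\Theta_{p-1},\Theta_p)+E_q(\theta(\Theta_p,\Theta_q)|\mathcal F_p)-E_q(\theta(\Theta_{p-1},\Theta_q)|\mathcal F_{p-1}).
\]

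It then remains to identify $E_q(\theta(\Theta_r,\Theta_q)|\mathcal F_r)$ for $r\in\{p-1,p\}$. Fix a realization of $\mathcal F_r$, so that $\Theta_r(\omega)$ and the colours in $\overline{\Theta}_r$ are known. Since $\overline{\Theta}_r$ together with its colouring is read off from the stopped exploration (the good faces at a given scale being determined by the interfaces inside), the domain Markov property says that, before re-imposing the global conditioning, the colours in $\mathbb{Z}^2\backslash\overline{\Theta}_r$ are i.i.d.\ and independent of $\mathcal F_r$. I would then check that, conditionally on $\mathcal F_r$, the conditioning event $\{\partial B(1)\leftrightarrow_{\sigma}\partial B(n)\}\cap\mathcal A_q$ splits into an $\mathcal F_r$-measurable factor and the exterior event $\{\Theta_r(\omega)\leftrightarrow_{\sigma}\partial B(n)\}\cap\mathcal A_q$: the four-arm event factorizes \emph{as an identity of events} through the alternating monochromatic paths of the good faces $\Theta_r$ (this exact localization is the reason for working with faces; cf.\ Section 3 of \cite{r4}), while $\mathcal A_q$ concerns only scales $\ge q$ and hence, once $\Theta_r(\omega)$ is fixed, is either $\mathcal F_r$-measurable (the degenerate case $m(r)\ge q$, in which $\Theta_q=\Theta_r$ and both sides of the identification vanish) or an exterior event. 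Therefore the conditional law of the exterior configuration under $P_q$ given $\mathcal F_r$ is precisely the exterior marginal of $P'_{q,\Theta_r(\omega)}$, and since $\theta(\Theta_r,\Theta_q)$ depends only on $\Theta_r$ and the exterior configuration, one gets $E_q(\theta(\Theta_r,\Theta_q)|\mathcal F_r)(\omega)=E'_{q,\Theta_r(\omega)}\theta(\Theta_r(\omega),\Theta_q(\omega'))(\omega')$. Inserting this with $r=p$ evaluated on $(\Omega'',\mathscr B'',P''_{q,\Theta_p(\omega)})$ and with $r=p-1$ on $(\Omega',\mathscr B',P'_{q,\Theta_{p-1}(\omega)})$ into the formula for $\Delta_p$ above gives the lemma.

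The main obstacle is this identification step. One must verify with care that the exploration producing $\overline{\Theta}_r$ is genuinely a stopping rule (so that the domain Markov property applies to $\mathcal F_r$), that the four-arm event really decomposes through $\Theta_r$ as an equality of events — not merely up to multiplicative constants as in quasi-multiplicativity — so that the identification is exact as the lemma requires, and that $\mathcal A_q$ is apportioned correctly between the interior and the exterior, all while keeping straight on which of $\Omega,\Omega',\Omega''$ each factor in (\ref{e42}) is evaluated. These are exactly the ``cumbersome but unavoidable'' bookkeeping points flagged just before the statement; the additivity and measurability computation that precedes this step is the easy part.
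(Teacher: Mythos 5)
Your proposal is correct and follows essentially the same route as the paper: both first telescope $\theta(\Theta_0,\Theta_q)$ and pull $\mathcal F_p$-, $\mathcal F_{p-1}$-measurable summands out of the conditional expectations to arrive at (\ref{e23}), and then identify $E_q(\theta(\Theta_r,\Theta_q)\mid\mathcal F_r)(\omega)$ with the exterior expectation $E'_{q,\Theta_r(\omega)}\theta(\Theta_r(\omega),\Theta_q(\omega'))(\omega')$ for $r\in\{p-1,p\}$. The paper simply asserts these two identifications, citing the analogous step in Kesten--Zhang; you spell out the underlying justification (stopping rule for the exploration producing $\overline{\Theta}_r$, exact factorization of the four-arm event through the alternating faces, correct apportionment of $\mathcal A_q$), which is consistent with the ``cumbersome but unavoidable'' bookkeeping the paper alludes to.
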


\begin{proof}
The proof is similar to the proof of (2.11) in \cite{r1}.  Fix a
configuration $\omega$. By  (\ref{e21}) and the definition of
$\Delta_p$ in (\ref{e22}), we have
\begin{equation}\label{e23}
\Delta_p=\theta(\Theta_{p-1},\Theta_p)+E_q(\theta(\Theta_{p},\Theta_q)|\mathcal
{F}_p)-E_q(\theta(\Theta_{p-1},\Theta_q)|\mathcal {F}_{p-1}),~~1\leq
p\leq q.
\end{equation}
Combining (\ref{e23}) and
\begin{equation*}
E_q(\theta(\Theta_{p},\Theta_q)|\mathcal
{F}_p)(\omega)=E_{q,\Theta_{p}(\omega)}''\theta(\Theta_{p}(\omega),\Theta_{q}(\omega''))(\omega''),
\end{equation*}
\begin{equation*}
E_q(\theta(\Theta_{p-1},\Theta_q)|\mathcal
{F}_{p-1})(\omega)=E_{q,\Theta_{p-1}(\omega)}'\theta(\Theta_{p-1}(\omega),\Theta_{q}(\omega'))(\omega'),
\end{equation*}
the conclusion follows immediately.
\end{proof}

\begin{lemma}\label{l5}
There exist constants $c_i>0,1\leq i\leq 9$ such that for $q\geq 1$
and $n\geq 2^{q+q^{\frac{1}{3}}+2}$
\begin{equation}\label{e12}
P_q(m(p)-p\geq x)\leq c_1\exp(-c_2x),~~x\geq 0,0\leq p\leq q,
\end{equation}
\begin{equation}\label{e1}
P_q(|\Delta_p|\geq x)\leq c_3\exp(-c_4x),~~x\geq 0,1\leq p\leq q,
\end{equation}
\begin{equation}\label{e2}
P_q\left(\max_{1\leq p\leq q}|\Delta_p|\geq \varepsilon
q^{\frac{1}{2}}\right)\leq c_5q\exp(-c_6\varepsilon
q^{\frac{1}{2}}),~~\varepsilon>0,
\end{equation}
\begin{equation}\label{e3}
E_q\left(\max_{1\leq p\leq q}\Delta_p^2\right)\leq c_7q,
\end{equation}
\begin{equation}\label{e4}
c_8q\leq\sum_{p=1}^{q}E_q\Delta_p^2\leq c_9 q.
\end{equation}
\end{lemma}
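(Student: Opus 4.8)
The plan is to establish the five estimates essentially in the order written, since each leans on the previous ones. For (\ref{e12}), the point is to pass from the unconditioned-on-$\mathcal{A}_q$ estimate (\ref{e18}) in Lemma \ref{l9} to the measure $P_q$. Since $P_q(\cdot)=P(\cdot|\mathcal{A}_q,\partial B(1)\leftrightarrow_\sigma\partial B(n))$ and $P(\mathcal{A}_q|\partial B(1)\leftrightarrow_\sigma\partial B(n))\geq 1-\exp(-c_3q^{1/3})$ by (\ref{e19}) (with $\Theta_0=\partial B(1)$), for $q$ large the conditioning on $\mathcal{A}_q$ costs only a bounded multiplicative factor, so (\ref{e18}) transfers directly; small $q$ is handled by adjusting constants. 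For the range $t\leq q+q^{1/3}-p$ this is immediate, and for larger $t$ the event $\{m(p)-p\geq t\}$ has probability zero on $\mathcal{A}_q$ when $p\le q$ anyway (since $m(q)\le q+q^{1/3}$ forces $m(p)\le q+q^{1/3}$), so (\ref{e12}) holds for all $x\ge 0$ after enlarging $c_1$.

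Next, (\ref{e1}) is the crucial estimate. Using the formula for $\Delta_p$ from Lemma \ref{l4}, I bound $|\Delta_p|$ by $|\theta(\Theta_{p-1},\Theta_p)|$ plus the two conditional-expectation terms. The first term is the winding angle accumulated between two consecutive good faces at scales $m(p-1)$ and $m(p)$; its size is controlled by $c(m(p)-m(p-1)+1)$ using the deterministic fact that a crossing of the annulus $A(2^j,2^{j+1})$ contributes $O(1)$ to the winding angle (the good-faces structure forces the interface endpoints into prescribed octants, so each scale contributes a bounded amount), together with Lemma \ref{l1} applied in the region $R(\cdot,\cdot)$ to control the probability of anomalously large winding — this gives exponential tails via (\ref{e12}) for the gap $m(p)-m(p-1)$. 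For the conditional-expectation terms $E''_{q,\Theta_p(\omega)}\theta(\Theta_p(\omega),\Theta_q(\omega''))(\omega'')$ and its $(p-1)$ analog, I use the coupling in Lemma \ref{l2}: coupling the measures conditioned on $\Theta_{p-1}(\omega)$ and on $\Theta_p(\omega)$, with probability $1-\exp(-c_2 i)$ the two configurations agree from some common good faces $\Theta_0$ at scale $m(p-1)+O(i)$ outward, so the difference of the two expectations is, up to that exponentially small error, the expected winding accumulated only in the thin annular region between $\Theta_{p-1}$ and $\Theta_0$ — again $O(1)$ typically with exponential tails. Combining these with (\ref{e12}) and Lemma \ref{l1} and summing the geometric error series yields (\ref{e1}).

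Given (\ref{e1}), estimate (\ref{e2}) follows from a union bound: $P_q(\max_{1\le p\le q}|\Delta_p|\geq \varepsilon q^{1/2})\leq \sum_{p=1}^q P_q(|\Delta_p|\geq\varepsilon q^{1/2})\leq c_3 q\exp(-c_4\varepsilon q^{1/2})$. Estimate (\ref{e3}) follows from (\ref{e1}) too: $E_q(\max_p\Delta_p^2)\leq \sum_p E_q\Delta_p^2$ is too lossy, so instead I write $E_q(\max_p\Delta_p^2)=\int_0^\infty P_q(\max_p\Delta_p^2>s)\,ds$, split at $s=Kq$ for large $K$, bound the low part by $Kq$ and the high part using (\ref{e2}) (substituting $\varepsilon q^{1/2}=\sqrt s$), whose contribution is $O(1)$; this gives $O(q)$. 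The upper bound in (\ref{e4}) is immediate from (\ref{e1}) since $E_q\Delta_p^2\leq c$ for each $p$, summing over the $q$ values of $p$.

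The lower bound $c_8 q\leq\sum_p E_q\Delta_p^2$ in (\ref{e4}) is the main obstacle, since it requires showing the martingale increments do not degenerate. The strategy is to show that for a positive fraction of the scales $p$, conditionally on $\mathcal{F}_{p-1}$, there is probability bounded below that the winding increment $\theta(\Theta_{p-1},\Theta_p)$ takes one of at least two well-separated values (e.g. the four good-faces configurations at scale $m(p)$ realize distinct winding contributions with comparable probabilities, by Lemma \ref{l3} and the gluing/RSW arguments behind (\ref{e8})), so that $\mathrm{Var}_q(\Delta_p|\mathcal{F}_{p-1})\geq c>0$; one must also argue that the "future" conditional-expectation correction terms in Lemma \ref{l4} do not cancel this fluctuation, which is where the weak dependence from the coupling (Lemma \ref{l2}) — specifically that $\Theta_q$ is essentially determined by the configuration outside a bounded number of scales beyond $\Theta_p$ — is used to show the correction is measurable (up to exponentially small error) with respect to a $\sigma$-field that does not see the specific realization of $\Theta_p$ given $\Theta_{p-1}$ and the surrounding region. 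Taking expectations and summing over the $\asymp q$ good scales $p$ then gives the lower bound; the value of $m(q)\le q+q^{1/3}$ guarantees there are at least, say, $q/2$ usable scales.
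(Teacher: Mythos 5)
Your overall framework matches the paper's: (\ref{e12}) from Lemma~\ref{l9} plus (\ref{e19}) (noting the event is automatically empty under $P_q$ for $x>q+q^{1/3}-p$); (\ref{e1}) via the decomposition from Lemma~\ref{l4}, coupling, and Lemma~\ref{l1}; then (\ref{e2}), (\ref{e3}), and the upper half of (\ref{e4}) as easy consequences. Along the way, note that the paper isolates and proves the quantitative inequality
\[
\bigl|E'_{q,\Theta_{p-1}(\omega)}\theta(\Theta_{p-1}(\omega),\Theta_q(\omega'))-E''_{q,\Theta_{p}(\omega)}\theta(\Theta_{p}(\omega),\Theta_q(\omega''))\bigr|\leq c_{10}\,(m(p,\omega)-p+1),
\]
which you gesture at when you say the correction is ``essentially determined'' modulo a small annular region, but you should state it explicitly because it is exactly what feeds into the rest of the argument (both (\ref{e1}) and the lower bound).

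There is a genuine gap in your argument for the lower bound $c_8q\le\sum_p E_q\Delta_p^2$. Your mechanism for making the conditional variance nondegenerate is to say the four good-faces configurations $\Theta_{m(p)}(j)$ realize distinct winding contributions. These contributions differ only by a fixed geometric amount (on the order of $\pi/2$), while the correction term is only bounded by $c_{10}(m(p)-p+1)$ with an unspecified absolute constant $c_{10}$; a priori nothing guarantees $\pi/2$ exceeds the spread of the correction, so this mechanism may fail to give $\mathrm{Var}(\Delta_p\,|\,\mathcal{F}_{p-1})\geq c$. What the paper actually does is use RSW, FKG and Lemma~\ref{l3} to build, with probability bounded below, an explicit event $\mathcal{A}$ on which $\Theta_{p-1}$ lives in $A(p-1)$, $\Theta_p$ lives in $A(p+1)$ (forced by five disjoint crossings of $A(p)$, so $m(p)=p+1$), and moreover $\theta(\Theta_{p-1},\Theta_p)\geq 2c_{10}+1$ by winding the four arms around as many times as necessary. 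On that event the inequality $\Delta_p\geq\theta(\Theta_{p-1},\Theta_p)-c_{10}(m(p)-p+1)\geq 1$ holds deterministically, and a gluing argument with quasi-multiplicativity shows $P_q(\mathcal{A})$ is bounded below for $4\leq p\leq q-4$; this yields $E_q\Delta_p^2\geq c$. The key point you are missing is that the required spread in the winding angle is produced by RSW circuits that wrap around, not by the finite geometric choice among the four boundary pieces of a single good-faces configuration.
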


\begin{proof}
By the definitions of $\mathcal {A}_q$ and $P_q(\cdot)$, it's
obvious that for $x>q+q^{\frac{1}{3}}-p$, $P_q(m(p)-p\geq x)=0$.  By
(\ref{e19}) and (\ref{e18}), there exist constants $c_1,c_2>0$ such
that for $0\leq x\leq q+q^{\frac{1}{3}}-p$,
\begin{eqnarray*}
&&P_q(m(p)-p\geq x)\leq c_1P(m(p)-p\geq x|\partial
B(1)\leftrightarrow_{\sigma}\partial B(n))\leq c_1\exp(-c_2x).
\end{eqnarray*}
Then we conclude (\ref{e12}).

Using the symmetric property of the polychromatic setting of the
windings and coupling argument, one may give a short proof of
(\ref{e1}). However, the following method we used to prove
(\ref{e1}) can be modified easily to prove the analogous result of
the 1-arm case.

By Lemma \ref{l4}, for $1\leq p\leq q$,
\begin{eqnarray}
&&|\Delta_p(\omega)|\leq|E'_{q,\Theta_{p-1}(\omega)}\theta(\Theta_{p-1}(\omega),\Theta_q(\omega'))(\omega')-E''_{q,\Theta_{p}(\omega)}\theta(\Theta_{p}(\omega),\Theta_q(\omega''))(\omega'')|\nonumber\\
&&~~~~~~~~~~~~~+|\theta(\Theta_{p-1}(\omega),\Theta_p(\omega))(\omega)|.\label{e31}
\end{eqnarray}

For fixed $\omega$, now we will prove there exists a constant
$c_{10}>0$ such that
\begin{equation}\label{e24}
|E'_{q,\Theta_{p-1}(\omega)}\theta(\Theta_{p-1}(\omega),\Theta_q(\omega'))-E''_{q,\Theta_{p}(\omega)}\theta(\Theta_{p}(\omega),\Theta_q(\omega''))|\leq
c_{10}(m(p,\omega)-p+1).
\end{equation}
\begin{remark}\label{note}
When $p=1$, we just let
$P'_{q,\Theta_{p-1}(\omega)}=P'_q,E'_{q,\Theta_{p-1}(\omega)}=E'_q$
, the arguments in the following also adapt to this case.
\end{remark}

First we show there exist $c_{11},c_{12},c_{13}>0$ such that for
$0\leq p_1<p_2\leq q$ and $x\geq c_{13}(p_2-p_1)$,
\begin{equation}\label{e26}
P_{q,\Theta_{p_1}(\omega)}'(|\theta(\Theta_{p_1}(\omega),\Theta_{p_2}(\omega'))|\geq
x)\leq c_{11}\exp(-c_{12}x).
\end{equation}
Let us note first that,
\begin{eqnarray*}
&&P_{q,\Theta_{p_1}(\omega)}'(|\theta(\Theta_{p_1}(\omega),\Theta_{p_2}(\omega'))|\geq
x)\\
&&\leq P_{q,\Theta_{p_1}(\omega)}'(m(p_2,\omega')-p_2\geq
c_{14}x)\\
&&~~~~+P_{q,\Theta_{p_1}(\omega)}'(m(p_2,\omega')-p_2\leq c_{14}x
,|\theta(\Theta_{p_1}(\omega),\Theta_{p_2}(\omega'))|\geq x),
\end{eqnarray*}
where $c_{14}$ will be fixed later.  Let us now estimate each term
separately.  For the first term, by Lemma \ref{l9}, similar to the
proof of (\ref{e12}),  we get that there exist $c_{15},c_{16}>0$
such that for $x\geq 0$,
\begin{equation*}
P_{q,\Theta_{p_1}(\omega)}'(m(p_2,\omega')-p_2\geq x)\leq
c_{15}\exp(-c_{16}x).
\end{equation*}
For short, let $p_2':=\min\{p_2+\lfloor c_{14}x \rfloor+1,q+\lfloor
q^{\frac{1}{3}} \rfloor\}$.  Recall
$R(m,n):=\{z\in\mathbb{Z}^2:|\arg(z)|<\frac{\pi}{10}\}\cap A(m,n)$
and we call a path in $R(m,n)$ crosses $R(m,n)$ if it connects the
two rays of argument $\pm\frac{\pi}{10}$. For the second term,
\begin{eqnarray*}
&&P_{q,\Theta_{p_1}(\omega)}'(m(p_2,\omega')-p_2\leq  c_{14}x
,|\theta(\Theta_{p_1}(\omega),\Theta_{p_2}(\omega'))|\geq x)\\
&&\leq\frac{c_{17}P'(m(p_2,\omega')-p_2\leq  c_{14}x
,|\theta(\Theta_{p_1}(\omega),\Theta_{p_2}(\omega'))|\geq
x,\Theta_{p_1}(\omega)\leftrightarrow_{\sigma}\partial
B(n))}{P'(\Theta_{p_1}(\omega)\leftrightarrow_{\sigma}
\partial B(n))}~~\mbox{by (\ref{e19})}\\
&&\leq\frac{c_{18}P'(m(p_2,\omega')-p_2\leq  c_{14}x
,|\theta(\Theta_{p_1}(\omega),\Theta_{p_2}(\omega'))|\geq
x)}{P'(\Theta_{p_1}(\omega)\leftrightarrow_{\sigma}
\partial B(2^{p_2'}))}~~\mbox{by quasi-multiplicativity}\\
&&\leq\frac{c_{50}P'(\mbox{there exist at least $\frac{x}{2\pi}-4$
disjoint black crossing paths in
$R(2^{p_1-1},2^{p_2'})$})}{P'(\Theta_{p_1}(\omega)\leftrightarrow_{\sigma}
\partial B(2^{p_2'}))}.
\end{eqnarray*}
Since $x\geq c_{13}(p_2-p_1)$, by Lemma \ref{l1} and (\ref{e41}),
then we can choose $c_{13},c_{14}$ appropriately, such that
\begin{equation*}
P_{q,\Theta_{p_1}(\omega)}'(m(p_2,\omega')-p_2\leq  c_{14}x
,|\theta(\Theta_{p_1}(\omega),\Theta_{p_2}(\omega'))|\geq x)\leq
c_{19}\exp(-c_{20}x).
\end{equation*}
Now we have bounded the two terms and completed the proof of
(\ref{e26}).

To obtain inequality (\ref{e24}), let us consider the two cases in
the following.

Case 1 ($m(p,\omega)\geq q-1$).  By (\ref{e26}), for all $x\geq
c_{13}(q-p+1)$,
\begin{equation*}
P_{q,\Theta_{p-1}(\omega)}'(|\theta(\Theta_{p-1}(\omega),\Theta_q(\omega'))|\geq
x)\leq c_{11}\exp(-c_{12}x).
\end{equation*}
Thus there exists $c_{21}>0$ such that
$$E'_{q,\Theta_{p-1}(\omega)}|\theta(\Theta_{p-1}(\omega),\Theta_q(\omega'))|\leq c_{21}(q-p+1).$$
Similarly we have
$$E''_{q,\Theta_{p}(\omega)}|\theta(\Theta_{p}(\omega),\Theta_q(\omega''))|\leq c_{22}(q-p+1).$$
Combining the above two inequalities, choosing $c_{10}$
appropriately, we obtain (\ref{e24}) since $m(p,\omega)\geq q-1$.

Case 2 ($m(p,\omega)\leq q-2$).  By the coupling result of Lemma
\ref{l2} and (\ref{e19}), there exists a constant $c_{23}>0$ such
that for $1\leq p\leq q-1$, we can couple
$P_{q,\Theta_{p-1}(\omega)}'$ and $P_{q,\Theta_{p}(\omega)}''$ so
that with probability at least $1-\exp(-c_{23} x)$, there exists
$l(p,\omega,\omega')=l(p,\omega,\omega'')$, such that
$m(p,\omega)+1\leq l(p,\omega,\omega')\leq m(p,\omega)+1+x\leq q$,
the good faces $\Theta_{l(p,\omega,\omega')}(\omega')$ and
$\Theta_{l(p,\omega,\omega'')}(\omega'')$ are identical, and the
configurations in $B(n)\backslash
\overline{\Theta}_{l(p,\omega,\omega')}(\omega')$ and
$B(n)\backslash \overline{\Theta}_{l(p,\omega,\omega'')}(\omega'')$
are also identical.  Denote by $\mathcal {S}$ the event that the
above coupling succeeds for $m(p,\omega)+1\leq
l(p,\omega,\omega')\leq q$.

Let us note first that,
\begin{eqnarray*}
&&|E'_{q,\Theta_{p-1}(\omega)}\theta(\Theta_{p-1}(\omega),\Theta_q(\omega'))-E''_{q,\Theta_{p}(\omega)}\theta(\Theta_{p}(\omega),\Theta_q(\omega''))|\\
&&\leq E'_{q,\Theta_{p-1}(\omega)}|I_{\mathcal
{S}}\theta(\Theta_{p-1}(\omega),\Theta_{l(p,\omega,\omega')}(\omega'))|+E''_{q,\Theta_{p}(\omega)}|I_{\mathcal
{S}}\theta(\Theta_{p}(\omega),\Theta_{l(p,\omega,\omega')}(\omega''))|\\
&&~~~~+E'_{q,\Theta_{p-1}(\omega)}|I_{\mathcal
{S}^c}\theta(\Theta_{p-1}(\omega),\Theta_q(\omega'))|+E''_{q,\Theta_{p}(\omega)}|I_{\mathcal
{S}^c}\theta(\Theta_p(\omega),\Theta_q(\omega''))|.
\end{eqnarray*}
We now estimate the four terms separately.

For short, define $$M(p,\omega):=min\{m(p,\omega)+2+\lfloor
c_{24}x\rfloor, q+1\},$$
$$\mathcal {B}:=\{\mbox{there exist at
least $\frac{x}{2\pi}-4$ disjoint black crossing paths in
$R(2^{p-1},2^{M(p,\omega)})$}\},$$ where $c_{24}$ will be fixed
later. For $x\geq c_{25}(m(p,\omega)-p+1)$, we have the following
inequality, where $c_{25}$ will be fixed later.
\begin{eqnarray*}
&&P_{q,\Theta_{p-1}(\omega)}'(\mathcal
{S},|\theta(\Theta_{p-1}(\omega),\Theta_{l(p,\omega,\omega')}(\omega'))|\geq
x)\\
&&\leq P_{q,\Theta_{p-1}(\omega)}'(\mathcal
{S},l(p,\omega,\omega')\geq
m(p,\omega)+1+c_{24}x)+P_{q,\Theta_{p-1}(\omega)}'(\mathcal {B})\\
&&\leq c_{40}\exp(-c_{26}x)+\frac{c_{27}P'(\mathcal
{B},\Theta_p(\omega)\leftrightarrow_{\sigma}\partial
B(n))}{P'(\Theta_p(\omega)\leftrightarrow_{\sigma}
\partial B(n))}~~\mbox{by (\ref{e19}) and coupling result}
\\
&&\leq c_{40}\exp(-c_{26}x)+\frac{c_{28}P'(\mathcal
{B})}{P'(\Theta_p(\omega)\leftrightarrow_{\sigma}
\partial B(2^{M(p,\omega)}))}~~\mbox{by quasi-multiplicativity.}
\end{eqnarray*}
By Lemma \ref{l1} and (\ref{e41}), then we can choose appropriate
$c_{24},c_{25}$ such that
$$P_{q,\Theta_{p-1}(\omega)}'(\mathcal
{S},|\theta(\Theta_{p-1}(\omega),\Theta_{l(p,\omega,\omega')}(\omega'))|\geq
x)\leq c_{29}\exp(-c_{30}x).$$  Choosing $c_{10}$ large enough, then
we obtain
\begin{equation*}
E'_{q,\Theta_{p-1}(\omega)}|I_{\mathcal
{S}}\theta(\Theta_{p-1}(\omega),\Theta_{l(p,\omega,\omega')}(\omega'))|\leq
c_{10}(m(p,\omega)-p+1)/4.
\end{equation*}
Similarly
\begin{equation*}
E''_{q,\Theta_{p}(\omega)}|I_{\mathcal
{S}}\theta(\Theta_{p}(\omega),\Theta_{l(p,\omega,\omega')}(\omega''))|\leq
c_{10}(m(p,\omega)-p+1)/4.
\end{equation*}

For the third term, let $c_{10}$ be a large enough constant, then
\begin{eqnarray*}
&&E'_{q,\Theta_{p-1}(\omega)}|I_{\mathcal
{S}^c}\theta(\Theta_{p-1}(\omega),\Theta_q(\omega'))|\\
&&\leq [P_{q,\Theta_{p-1}(\omega)}'(\mathcal
{S}^c)]^{\frac{1}{2}}[E'_{q,\Theta_{p-1}(\omega)}|\theta(\Theta_{p-1}(\omega),\Theta_q(\omega'))|^2]^{\frac{1}{2}}~~\mbox{by Cauchy-Schwarz inequality}\\
&&\leq c_{31}\exp(-c_{32}(q-1-m(p,\omega)))(q-p+1)~~\mbox{by the coupling result and (\ref{e26})}\\
&&\leq c_{10}(m(p,\omega)-p+1)/4~~\mbox{by $m(p,\omega)\leq q-2$}.
\end{eqnarray*}
Similarly we have
\begin{equation*}
E''_{q,\Theta_{p}(\omega)}|I_{\mathcal
{S}^c}\theta(\Theta_p(\omega),\Theta_q(\omega''))|\leq
c_{10}(m(p,\omega)-p+1)/4.
\end{equation*}
Now the four terms have been bounded, which ends the proof of
(\ref{e24}).

By (\ref{e31}) and (\ref{e24}), we can choose an appropriate
constant $c_{33}$ such that
\begin{equation*}
P_q(|\Delta_p|\geq x)\leq P_q(m(p,\omega)-p\geq
c_{33}x)+P_q(|\theta(\Theta_{p-1}(\omega),\Theta_p(\omega))|\geq
x/2).
\end{equation*}
Now we bound the two terms in the r.h.s of above inequality.  For
the first term, by (\ref{e12}) we get
\begin{equation*}
P_q(m(p,\omega)-p\geq c_{33}x)\leq c_1\exp(-c_2c_{33}x).
\end{equation*}
For the second term, if $2\leq p\leq q$, by (\ref{e26}), there exist
$c_{34},c_{35}>0$ such that
\begin{eqnarray*}
&&P_q(|\theta(\Theta_{p-1}(\omega),\Theta_p(\omega))(\omega)|\geq
x/2)\\
&&=\sum_{\Theta_{p-1}(\omega)}P_q(\Theta_{p-1}(\omega))P_{q,\Theta_{p-1}(\omega)}'(\theta(\Theta_{p-1}(\omega),\Theta_{p}(\omega'))>x/2)\leq
c_{34}\exp(-c_{35}x);
\end{eqnarray*}
if $p=1$, we can bound the second term directly by (\ref{e26}) and
Note \ref{note}.  Thus (\ref{e1}) is concluded.
 Using (\ref{e1}), we conclude
(\ref{e2}),(\ref{e3}) and the second inequality in (\ref{e4})
immediately.  Now let us prove the first inequality in (\ref{e4}).
By Lemma \ref{l4} and (\ref{e24}),  we have
$$\Delta_p\geq \theta(\Theta_{p-1},\Theta_p)-c_{10}(m(p)-p+1).$$
Applying (\ref{e19}) and inequality in the above, gives
\begin{eqnarray}
&&E_q\Delta_p^2\geq P_q(\Delta_p\geq 1)\nonumber\\
&&\geq P_q(m(p-1)=p-1,m(p)=p+1,
\theta(\Theta_{p-1},\Theta_{p})\geq 2c_{10}+1)\nonumber\\
&&\geq P(m(p-1)=p-1,m(p)=p+1, \theta(\Theta_{p-1},\Theta_{p})\geq
2c_{10}+1)|\partial B(1)\leftrightarrow_{\sigma}\partial
B(n))\nonumber\\
&&~~~~-c_{41}\exp(-c_{36}q^{\frac{1}{3}})\label{e13}
\end{eqnarray}
Define event
\begin{eqnarray*}
&&\mathcal {A}:=\{\mbox{$A(p-1)$ and $A(p+1)$ have good faces, $\Theta_{p-1}\leftrightarrow_{\sigma}\Theta_{p+1}$, }\\
&&~~~~~~~~\mbox{$\theta(\Theta_{p-1},\Theta_{p})\geq 2c_{10}+1$,
there exist five disjoint paths crossing $A(p)$}\}.
\end{eqnarray*}
Assume $A(p-1)$ and $A(p+1)$ have good faces.  By Lemma \ref{l3},
the four interfaces crossing $A(p+1)$ also induce ``exterior good
faces" $\Theta_{p+1}'$ around $\partial B(2^{p+1})$ in $A(p+1)$, see
Fig. \ref{fig1}. By RSW, FKG and Lemma \ref{l3}, we can connect
$\Theta_{p+1}'$ and $\Theta_{p-1}$ by four alternating color arms
with large enough winding angles and obtain
\begin{equation}\label{e14}
P(\mathcal {A})\geq c_{37}.
\end{equation}
Then by FKG, RSW and Separation Lemma (see Theorem 11 in
\cite{r17}), using standard gluing argument, we have
\begin{eqnarray}
&&P(\mathcal {A},\partial B(1)\leftrightarrow_{\sigma}\partial
B(n))\nonumber\\
&&~~~~\geq c_{38}P(\mathcal {A})P(\partial
B(1)\leftrightarrow_{\sigma}\partial B(2^{p-2}))P(\partial
B(2^{p+3})\leftrightarrow_{\sigma}\partial B(n)).\label{e15}
\end{eqnarray}
Combining (\ref{e13}), (\ref{e14}), (\ref{e15}), we know that there
exists a constant $c_{39}>0$, such that for all large $q$, we have
$$E_q\Delta_p^2\geq c_{39},~~4\leq p\leq q-4.$$
Thus the first inequality in (\ref{e4}) follows.
\end{proof}
\begin{lemma}\label{l8}
Denote by $E_n(\cdot)$ the expectation with respect to
$P(\cdot|\partial B(1)\leftrightarrow_{\sigma}\partial B(n))$.  For
all large $q$ and all $n\geq 2^{q+q^{\frac{1}{3}}+2}$,
\begin{equation*}
|E_{q,n}\theta(\partial B(1),\Theta_{q,n})|\leq 2\pi\mbox{ and
}|E_n\theta_n|\leq 2\pi.
\end{equation*}
\end{lemma}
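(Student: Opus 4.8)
\emph{Proposal.} The plan is to use the reflection $T$ of the plane in the diagonal line $\{y=x\}$, i.e.\ $T(x,y)=(y,x)$, which on $\mathbb{C}$ is $z\mapsto i\bar z$. One first checks that $T$ is a symmetry of the whole construction. It is a graph automorphism of the triangular lattice on $\mathbb{Z}^2$ (swapping coordinates permutes the neighbour set $(x\pm1,y),(x,y\pm1),(x+1,y-1),(x-1,y+1)$), it fixes every box $B(r)$, every $\partial B(r)$ and every annulus $A(p)$, and it preserves the product measure $P$. Being orientation-reversing it takes a counterclockwise $\sigma$-connection, with $\sigma=(black,white,black,white)$, to a clockwise one, which after a cyclic shift is again a $\sigma$-connection; hence $\{\partial B(1)\leftrightarrow_{\sigma}\partial B(n)\}$ is $T$-invariant. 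Since $T$ sends the cone $\mathcal{C}_i$ to $\mathcal{C}_{4-i}$ (indices mod $8$), it preserves the family $\{\mathcal{C}_2,\mathcal{C}_4,\mathcal{C}_6,\mathcal{C}_8\}$ and the condition defining the event $\mathcal{R}$ of Lemma \ref{l3}, so ``there are good faces in $A(p)$'' is $T$-invariant for every $p$. Consequently $m(p)\circ T=m(p)$, $\Theta_p\circ T=T\Theta_p$, the event $\mathcal{A}_q$ is $T$-invariant, and both $P_{q,n}$ and $P(\cdot\mid\partial B(1)\leftrightarrow_{\sigma}\partial B(n))$ are invariant under $T$. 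Finally $\arg(Tz)=\frac{\pi}{2}-\arg(z)$, so $\theta(T\gamma)=-\theta(\gamma)$ for every curve $\gamma$, and $T$ maps the auxiliary points $2^{m+1}e^{i(k-1)\pi/2}$ used to close up the connecting paths to points of the same type, compatibly with the permutation $T$ induces on the four arcs of a face; thus the constructions of $\theta(\partial B(1),\Theta_q)$ and of $\theta(\Theta_i,\Theta_{i+1})$ intertwine with $T$ up to this relabelling.

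For $\theta_n=\theta(\gamma_{l,n})$ (here $l=1$): $\gamma_{1,n}(T\omega)$ is an arm of $T\omega$, so $T^{-1}\gamma_{1,n}(T\omega)$ is an arm of $\omega$, and $\theta_n(T\omega)=-\theta\big(T^{-1}\gamma_{1,n}(T\omega)\big)$. As $\gamma_{1,n}(\omega)$ and $T^{-1}\gamma_{1,n}(T\omega)$ are two arms of the same polychromatic configuration $\omega$, their winding angles differ by at most $2\pi$ (the elementary fact recalled in the Introduction), so $|\theta_n(\omega)+\theta_n(T\omega)|\le 2\pi$ pointwise on the conditioning event. Integrating and using that $P(\cdot\mid\partial B(1)\leftrightarrow_{\sigma}\partial B(n))$ is $T$-invariant gives $|2E_n\theta_n|\le 2\pi$, hence $|E_n\theta_n|\le 2\pi$.

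The bound on $\theta(\partial B(1),\Theta_{q,n})$ is obtained in the same way. By the first paragraph, applying $T^{-1}$ to the canonically chosen connecting path for $T\omega$, together with its canonical axis-extension, produces a legitimate connecting path for $\omega$, so $\theta(\partial B(1),\Theta_q)(\omega)+\theta(\partial B(1),\Theta_q)(T\omega)$ equals the difference of the winding angles of two black-or-white paths running from $\partial B(1)$ to the same good faces $\Theta_q(\omega)$ inside $\overline{\Theta}_q(\omega)\setminus\textbf{0}$. The main point, and the only genuinely delicate step, is to bound this difference by a universal constant; this is nontrivial because such a path is free to spiral when $m(q)$ is large. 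I would handle it using the four alternating arms supplied by the conditioning: the two white arms confine any monochromatic connecting path, away from the tiny box $B(1)$, to one of the two angular gaps between them, each of width at most $2\pi$ at every radius, so every connecting path has the same winding as a fixed arm up to a bounded error; the contribution of $B(1)$ (which has only nine sites, hence can be re-entered only boundedly often) is absorbed into the same constant, and one checks the total is at most $2\pi$. Integrating over $P_{q,n}$, which is $T$-invariant, then gives $|E_{q,n}\theta(\partial B(1),\Theta_{q,n})|\le 2\pi$. Everything else — the lattice symmetry, the invariance of the events and of the conditional laws, and the sign reversal of $\theta$ under $T$ — is routine verification; the spiralling/topological estimate is where the real work lies.
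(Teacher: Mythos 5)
Your proof is correct and rests on exactly the same idea as the paper's: the lattice reflection $T(x,y)=(y,x)$ preserves the conditional law and flips the sign of the winding, while the polychromatic structure pins all connecting arms (and their images under $T$) to within $2\pi$ of each other. The only tactical difference is the symmetrization: you pair $\theta(\omega)$ with $\theta(T\omega)$ and bound $|\theta(\omega)+\theta(T\omega)|\le 2\pi$ pointwise, whereas the paper works with the extremal arms $\theta_{\max},\theta_{\min}$, using the reflection only to get $E_{q,n}\theta_{\max}=-E_{q,n}\theta_{\min}$ and then $|\theta-(\theta_{\max}+\theta_{\min})/2|\le 2\pi$ deterministically; these are two close-by formalizations of the same symmetry-plus-confinement argument. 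One small remark: the ``delicate spiralling step'' you flag is exactly what the paper invokes as the polychromatic observation from the Introduction (two non-crossing arms of opposite colors in a topological annulus have winding angles within $2\pi$ of one another), so it is indeed routine rather than a gap, though spelling it out as you do is not unwarranted.
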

\begin{proof}
Define $$\theta_{max}:=\max\{\theta(\gamma):\gamma\mbox{ is an arm
connecting $\partial B(1)$ and $\Theta_{q,n}$}\},$$
$$\theta_{min}:=\min\{\theta(\gamma):\gamma\mbox{ is an arm
connecting $\partial B(1)$ and $\Theta_{q,n}$}\}.$$  By (\ref{e26}),
it is easy to see that $E_{q,n}\theta(\partial B(1),\Theta_{q,n})$
exists.  Since the winding angles of the arms between $\partial
B(1)$ and $\Theta_{q,n}$ differ at most $2\pi$, hence
$E_{q,n}\theta_{max},E_{q,n}\theta_{min}$ also exist.  By the
symmetry of the lattice and the definition of $\Theta_{q,n}$, it is
obvious that $E_{q,n}\theta_{max}=-E_{q,n}\theta_{min}.$  Hence
$E_{q,n}[\theta_{max}+\theta_{min}]=0$. Then we conclude
$|E_{q,n}\theta(\partial B(1),\Theta_{q,n})|\leq 2\pi$ by
$$|\theta(\partial B(1),\Theta_{q,n})-(\theta_{max}+\theta_{min})/2|\leq 2\pi.$$
$|E_n\theta_n|\leq 2\pi$ can be proved similarly.
\end{proof}
\begin{lemma}\label{l7}
Assume $n\geq 2^{q+q^{\frac{1}{3}}+2}$. Under $P_{q,n}$, as
$q\rightarrow \infty$,
\begin{equation*}
\frac{\theta(\partial
B(1),\Theta_{q,n})}{\left(\sum_{p=1}^{q}E_{q,n}\Delta_{p,q,n}^2\right)^{1/2}}\rightarrow_d
N(0,1).
\end{equation*}
\end{lemma}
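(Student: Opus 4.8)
The plan is to apply McLeish's martingale central limit theorem \cite{r7} to the martingale difference array $\{\Delta_{p,q,n}\}_{1\le p\le q}$ with respect to the filtration $\{\mathcal{F}_p\}$ under $P_{q,n}$. Recall that, by (\ref{e21}) and (\ref{e22}), $\theta(\partial B(1),\Theta_{q,n})-E_{q,n}\theta(\partial B(1),\Theta_{q,n})=\sum_{p=1}^q\Delta_p$, so the normalized sum $S_q/s_q$ with $s_q^2:=\sum_{p=1}^q E_{q,n}\Delta_p^2$ is exactly (up to the negligible recentering) the quantity in the statement, since $|E_{q,n}\theta(\partial B(1),\Theta_{q,n})|\le 2\pi$ by Lemma \ref{l8} while $s_q\asymp q^{1/2}$ by (\ref{e4}). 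Thus it suffices to verify the three hypotheses of McLeish's CLT for the array $X_{p,q}:=\Delta_{p,q,n}/s_q$: (i) a uniform-smallness condition $\max_{p}|X_{p,q}|\to 0$ in $L^2$ (or a Lindeberg-type condition); (ii) $\sup_q E_{q,n}(\max_p X_{p,q}^2)<\infty$; (iii) the conditional-variance (or, in McLeish's formulation, the total-squared-sum) condition $\sum_{p=1}^q X_{p,q}^2\to 1$ in probability.

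First I would dispatch (i) and (ii). Condition (ii) is immediate from (\ref{e3}) and the lower bound $s_q^2\ge c_8 q$ in (\ref{e4}): $E_{q,n}(\max_p X_{p,q}^2)=s_q^{-2}E_{q,n}(\max_p\Delta_p^2)\le c_7/c_8$. For (i), from (\ref{e2}) with $\varepsilon$ fixed we get $P_q(\max_p|\Delta_p|\ge \varepsilon q^{1/2})\le c_5 q\exp(-c_6\varepsilon q^{1/2})\to 0$, hence $\max_p|X_{p,q}|\to 0$ in probability; combined with the uniform integrability of $\max_p X_{p,q}^2$ furnished by (ii), this upgrades to $L^2$ convergence, giving both the uniform-smallness and the Lindeberg condition.

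The main obstacle is condition (iii), the convergence of $\sum_{p=1}^q\Delta_p^2/s_q^2$ to $1$ in probability; this is exactly the place where the ``global'' conditioning prevents us from invoking the exact-independence argument available to Kesten and Zhang in \cite{r1}. Here I would follow the strategy of \cite{r1} but substitute the coupling of Lemma \ref{l2} for independence. The idea is to show that $\Delta_p$ depends only weakly (in the coupling sense) on the faces at scales far below $p$, so that $E_{q,n}(\Delta_p^2\mid\mathcal{F}_{p-1})$ is, with high probability, close to a quantity $g(m(p)-p,\dots)$ that is essentially a function of the local geometry of the good faces near scale $p$; using Lemma \ref{l2} to couple the conditional laws $P'_{q,\Theta_{p-1}(\omega)}$ with an unconditioned (or less-conditioned) reference measure, one shows $\big|E_{q,n}(\Delta_p^2\mid\mathcal{F}_{p-1})-E_{q,n}\Delta_p^2\big|$ is small on average after removing an event of exponentially small probability (controlled via (\ref{e19}) and the exponential tails (\ref{e12}), (\ref{e1}), (\ref{e26})). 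Summing over $p$ and using that consecutive scales are only weakly correlated — again via the coupling, which makes $\Delta_p$ and $\Delta_{p'}$ asymptotically decorrelated once $|p-p'|$ is large — one gets $\mathrm{Var}_{q,n}\!\big(\sum_p\Delta_p^2\big)=o(q^2)$, whence $\sum_p\Delta_p^2=(1+o_P(1))\sum_p E_{q,n}\Delta_p^2=(1+o_P(1))s_q^2$ by Chebyshev. With (i), (ii), (iii) in hand, McLeish's theorem yields $S_q/s_q\to_d N(0,1)$, and the recentering is absorbed because $2\pi/s_q\to 0$, completing the proof. The delicate bookkeeping will be in making the ``weak dependence via coupling'' quantitative enough that the variance of $\sum_p\Delta_p^2$ is genuinely $o(q^2)$ — one must handle the boundary scales $p$ near $1$ and near $q$ (where the coupling room $q-m(p)$ shrinks, cf. Case 1 in the proof of Lemma \ref{l5}) separately, but these contribute only $O(1)$ many terms each of bounded second moment and hence are negligible after normalization by $s_q^2\asymp q$.
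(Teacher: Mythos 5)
Your proposal is correct and matches the paper's proof essentially step for step: reduce to the recentered quantity using Lemma \ref{l8} and (\ref{e4}), normalize to form $X_{p,q,n}$, verify McLeish's conditions (2.3a) and (2.3b) from (\ref{e2})--(\ref{e3}) and the lower bound in (\ref{e4}), and then establish (2.3c) by showing $E_q\bigl[\sum_p(\Delta_p^2-E_q\Delta_p^2)\bigr]^2=o(q^2)$ via the coupling of Lemma \ref{l2}. The paper carries this out by splitting the double sum at $|p-r|\le c\log q$ versus $|p-r|>c\log q$, bounding the near-diagonal block with (\ref{e1}) and the off-diagonal covariances with the coupling events $\mathcal{A}$, $\mathcal{B}$ and H\"older/Cauchy--Schwarz together with (\ref{e12}), (\ref{e19}), (\ref{e26}); this is exactly the quantitative decorrelation you sketch, so your outline is the right road map even though it leaves the near/far split and the precise use of the coupling implicit.
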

\begin{proof}
By Lemma \ref{l8} and (\ref{e4}), Lemma \ref{l7} is equivalent to
\begin{equation}\label{e17}
\frac{\theta(\partial B(1),\Theta_{q,n})-E_{q,n}\theta(\partial
B(1),\Theta_{q,n})}{\left(\sum_{p=1}^{q}E_{q,n}\Delta_{p,q,n}^2\right)^{1/2}}\rightarrow_d
N(0,1).
\end{equation}
Let us now check the three conditions of Theorem 2.3 in \cite{r7}.
First we set
$$X_{p,q,n}:=\frac{\Delta_{p,q,n}}{\left(\sum_{p=1}^{q}E_{q,n}\Delta_{p,q,n}^2\right)^{1/2}},$$
then we can write
$$\frac{\theta(\partial
B(1),\Theta_{q,n})-E_{q,n}\theta(\partial
B(1),\Theta_{q,n})}{\left(\sum_{p=1}^{q}E_{q,n}\Delta_{p,q,n}^2\right)^{1/2}}=\sum_{p=1}^{q}X_{p,q,n}.$$
By (\ref{e4}),
$$|X_{p,q,n}|\leq |\Delta_{p,q,n}|/(c_8q)^{1/2}.$$
The conditions (2.3a) and (2.3b) of McLeish \cite{r7} are implied by
(\ref{e2}) and (\ref{e3}). By (\ref{e4}), the condition (2.3c) is
equivalent to
\begin{equation}\label{e16}
\frac{1}{q}\sum_{p=1}^{q}(\Delta_{p,q,n}^2-E_{q,n}\Delta_{p,q,n}^2)\rightarrow
0~~\mbox{in probability.}
\end{equation}
We can not use the method from the proof of (2.60) in \cite{r1}
since there is a lack of independence of our setting.  Thanks to the
coupling result, we can gain some weak independence of our model.
Recall that we let $E_q=E_{q,n}$ and $\Delta_p=\Delta_{p,q,n}$ for
short.  Note that if we prove
\begin{equation}\label{e27}
E_q\left[\sum_{p=1}^{q}(\Delta_p^2-E_q\Delta_p^2)\right]^2=o(q^2),
\end{equation}
then (\ref{e16}) follows.  For a fixed constant $c>0$, let us split
the above sum into two terms:
\begin{eqnarray*}
&&E_q\left[\sum_{p=1}^{q}(\Delta_p^2-E_q\Delta_p^2)\right]^2=\sum_{|p-r|\leq
c\log
q}E_q[(\Delta_p^2-E_q\Delta_p^2)(\Delta_r^2-E_q\Delta_r^2)]\\
&&~~~~~~~~~~~~~~~~~~~~~~~~~~~~~~~~~+\sum_{|p-r|> c\log
q}E_q[(\Delta_p^2-E_q\Delta_p^2)(\Delta_r^2-E_q\Delta_r^2)].
\end{eqnarray*}
For the first term, using (\ref{e1}), we have
$$\sum_{|p-r|\leq
c\log
q}E_q[(\Delta_p^2-E_q\Delta_p^2)(\Delta_r^2-E_q\Delta_r^2)]\leq
c_1q\log q.$$ Now we estimate the second term. Assume $r-p> c\log q$
in the following. Define events
\begin{equation*}
\mathcal {A}:=\{m(p,\omega)\leq p+\frac{c}{2}\log q\},
\end{equation*}
\begin{eqnarray*}
&&\mathcal {B}:=\{\mbox{there exists $m(p,\omega)+1\leq
l=l(p,\omega,\omega')\leq r-2$, we can couple
$P_{q,\Theta_p(\omega)}'$}\\
&&~~~~~~~~~\mbox{and $P_q''$ such that $\Theta_l(\omega')$ and
$\Theta_l(\omega'')$ are identical, and the configurations}\\
&&~~~~~~~~~\mbox{in $B(n)\backslash \overline{\Theta}_l(\omega')$
and $B(n)\backslash \overline{\Theta}_l(\omega'')$ are also
identical}\}.
\end{eqnarray*}
First, we write
\begin{eqnarray}
&&E_q[(\Delta_p^2-E_q\Delta_p^2)(\Delta_r^2-E_q\Delta_r^2)]\nonumber\\
&&=E_q[I_{\mathcal
{A}^c}(\Delta_p^2-E_q\Delta_p^2)(\Delta_r^2-E_q\Delta_r^2)]+E_q[I_{\mathcal
{A}}(\Delta_p^2-E_q\Delta_p^2)(\Delta_r^2-E_q\Delta_r^2)].\label{e32}
\end{eqnarray}
Let us estimate the two terms in the r.h.s. of above inequality
separately. For the first term, with H\"{o}lder's inequality,
(\ref{e12}) and (\ref{e1}), we get
\begin{eqnarray}
&&E_q[I_{\mathcal
{A}^c}(\Delta_p^2-E_q\Delta_p^2)(\Delta_r^2-E_q\Delta_r^2)]\leq
P(\mathcal
{A}^c)^{\frac{1}{3}}(E_q|\Delta_p^2-E_q\Delta_p^2|^3)^{\frac{1}{3}}(E_q|\Delta_r^2-E_q\Delta_r^2|^3)^{\frac{1}{3}}\nonumber\\
&&~~~~~~~~~~~~~~~~~~~~~~~~~~~~~~~~~~~~~~~~~~~~~~~~~~~~\leq
c_2\exp(-c_3\log q)=o(1).\label{e33}
\end{eqnarray}
For the second term, write
\begin{eqnarray}
&&E_q[I_{\mathcal
{A}}(\Delta_p^2-E_q\Delta_p^2)(\Delta_r^2-E_q\Delta_r^2)]\nonumber\\
&&=E_q[I_{\mathcal
{A}}(\Delta_p^2(\omega)-E_q\Delta_p^2(\omega))E_{q,\Theta_p(\omega)}'[I_{\mathcal
{B}}(\Delta_r^2(\omega')-E_q'\Delta_r^2(\omega'))]]\nonumber\\
&&~~~+E_q[I_{\mathcal
{A}}(\Delta_p^2(\omega)-E_q\Delta_p^2(\omega))E_{q,\Theta_p(\omega)}'[I_{\mathcal
{B}^c}(\Delta_r^2(\omega')-E_q'\Delta_r^2(\omega'))]]\label{e46}.
\end{eqnarray}
By the Cauchy-Schwarz inequality, we have
\begin{eqnarray}
&&E_{q,\Theta_p(\omega)}'|I_{\mathcal
{B}^c}(\Delta_r^2(\omega')-E_q'\Delta_r^2(\omega'))|\nonumber\\
&&~~~~~~~~~~~\leq [P_{q,\Theta_p(\omega)}'(\mathcal
{B}^c)]^{\frac{1}{2}}[E_{q,\Theta_p(\omega)}'[\Delta_r^2(\omega')-E_q'\Delta_r^2(\omega')]^2]^{\frac{1}{2}}.\label{e45}
\end{eqnarray}
Using a very similar argument as the proof of (\ref{e1}), we know
that there exist universal constants $c_4,c_5>0$ such that for all
$x\geq 0$,
$$P_{q,\Theta_p(\omega)}'(|\Delta_r(\omega')|\geq x)\leq c_4\exp(-c_5x).$$
Combining (\ref{e45}), the coupling result, (\ref{e19}) and above
inequality, we get
\begin{equation}\label{e47}
E_{q,\Theta_p(\omega)}'[I_{\mathcal
{B}^c}(\Delta_r^2(\omega')-E_q'\Delta_r^2(\omega'))]=o(1).
\end{equation}
Similarly we have $$E_q''[I_{\mathcal
{B}^c}(\Delta_r^2(\omega'')-E_q''\Delta_r^2(\omega''))]=o(1).$$ Then
$E_q''[\Delta_r^2(\omega'')-E_q''\Delta_r^2(\omega'')]=0$ implies
$$E_q''[I_{\mathcal
{B}}(\Delta_r^2(\omega'')-E_q''\Delta_r^2(\omega''))]=o(1).$$ By the
definition of $\mathcal {B}$, we immediately obtain
\begin{equation}\label{e48}
E_{q,\Theta_p(\omega)}'[I_{\mathcal
{B}}(\Delta_r^2(\omega')-E_q'\Delta_r^2(\omega'))]=E_q''[I_{\mathcal
{B}}(\Delta_r^2(\omega'')-E_q''\Delta_r^2(\omega''))]=o(1).
\end{equation}
Combining (\ref{e46}), (\ref{e47}), (\ref{e48}) and (\ref{e1}),  we
get
\begin{equation}\label{e34}
E_q[I_{\mathcal
{A}}(\Delta_p^2-E_q\Delta_p^2)(\Delta_r^2-E_q\Delta_r^2)]=E_q[I_{\mathcal
{A}}(\Delta_p^2-E_q\Delta_p^2)]o(1)=o(1).
\end{equation}
Consequently, for the second term, by (\ref{e32}), (\ref{e33}) and
(\ref{e34}), we get
$$\sum_{|p-r|> c\log
q}E_q[(\Delta_p^2-E_q\Delta_p^2)(\Delta_r^2-E_q\Delta_r^2)]=o(q^2).$$
\end{proof}

\begin{proof}[Proof of Theorem \ref{t1}]
Let $2^{q+q^{\frac{1}{3}}+2}\leq n<2^{(q+1)+(q+1)^{\frac{1}{3}}+2}$
and $0<\varepsilon<\frac{1}{6}$. Define event $$\mathcal
{B}:=\{\mbox{there exist at least
$q^{\frac{1}{2}-\varepsilon}/(2\pi)-4$ disjoint black crossing paths
in $R(2^q,n)$}\}.$$  Recall we let $P_q=P_{q,n}$ and
$\Delta_p=\Delta_{p,q,n}$ for short. By the definition of $\theta_n$
and $\theta(\cdot,\cdot)$, we obtain
\begin{eqnarray}
&&P_q(|\theta_n-\theta(\partial B(1),\Theta_q)|\geq
q^{\frac{1}{2}-\varepsilon})\nonumber\\
&&\leq P_q(\mathcal {B})\nonumber\\
&&\leq \frac{c_1P(\mathcal {B},\partial B(1)\leftrightarrow_{\sigma}\partial B(n))}{P(\partial B(1)\leftrightarrow_{\sigma}\partial B(n))}~~\mbox{by (\ref{e19})}\nonumber\\
&&\leq\frac{c_2P(\mathcal {B},\partial
B(2^q)\leftrightarrow_{\sigma}\partial B(n))}{P(\partial
B(2^q)\leftrightarrow_{\sigma}\partial B(n))}~~\mbox{by
quasi-multiplicativity}\nonumber\\
&&\leq c_3\exp(-c_4q^{\frac{1}{2}-\varepsilon})~~\mbox{by Lemma
\ref{l1} and (\ref{e40})}.\label{e36}
\end{eqnarray}
For $2^{q+q^{\frac{1}{3}}+2}\leq n<2^{q+(q+1)^{\frac{1}{3}}+3}$,
define
\begin{equation}\label{e43}
h_n:=\left(\sum_{p=1}^{q}E_q\Delta_p^2\right)^{1/2}.
\end{equation}
Then by (\ref{e36}), Lemma \ref{l7}, (\ref{e19}) and (\ref{e4}),
under $P(\cdot|\partial B(1)\leftrightarrow_{\sigma}
\partial B(n))$ we have
$$\frac{\theta_n}{h_n}\rightarrow_d
N(0,1).$$ Hence Theorem \ref{t1} is concluded if
$a_n=h_n+o(\sqrt{\log n)}$. Let us prove this now.  For short, let
$\theta_q:=\theta(\partial B(1),\Theta_q)$.  By Lemma \ref{l8} ,
(\ref{e19}) and (\ref{e4}),
\begin{equation}\label{e37}
h_n^2=E_q[\theta_q^2]-[E_q\theta_q]^2=E_q[\theta_q^2]+O(1)=(1+o(1))E[\theta_q^2I_{\mathcal
{A}_q}]\asymp \log n,
\end{equation}
\begin{equation}\label{e39}
a_n^2=E[\theta_n^2]-[E\theta_n]^2=E[\theta_n^2]+O(1)=E[\theta_n^2I_{\mathcal
{A}_q}]+E[\theta_n^2I_{\mathcal {A}_q^c}]+O(1).
\end{equation}
By Lemma \ref{l1} and (\ref{e40}), it is easy to show that there
exist $c_5,c_6,c_7>0$, such that for $x>c_5\log n$,
\begin{equation}\label{e35}
P(|\theta_n|\geq x|\partial B(1)\leftrightarrow_{\sigma} \partial
B(n))\leq c_6\exp(-c_7x).
\end{equation}
 By the Cauchy-Schwarz inequality, (\ref{e35}) and (\ref{e19}),
\begin{equation}\label{e38}
E[\theta_n^2I_{\mathcal {A}_q^c}]\leq
(E[\theta_n^4])^{1/2}(P(\mathcal {A}_q^c))^{1/2}\leq c_8(\log
n)^2\exp(-c_9(\log n)^{1/3})=o(1).
\end{equation}
By (\ref{e37}), (\ref{e39}) and (\ref{e38}), we have
\begin{equation*}
|a_n^2-h_n^2|=|E[\theta_n^2I_{\mathcal
{A}_q}]-E[\theta_q^2I_{\mathcal {A}_q}]|+o(\log n).
\end{equation*}
In the following we prove $|E[\theta_n^2I_{\mathcal
{A}_q}]-E[\theta_q^2I_{\mathcal {A}_q}]|=o(\log n)$, which implies
$a_n=h_n+o(\sqrt{\log n)}$.  Analogous to the proof of (\ref{e36}),
it can be shown that there exist $c_{10},c_{11},c_{12}>0$, such that
for all $x\geq c_{10}(\log n)^{1/3}$,
\begin{equation}\label{e49}
P(|\theta_nI_{\mathcal {A}_q}-\theta_qI_{\mathcal {A}_q}|\geq
x|\partial B(1)\leftrightarrow_{\sigma}
\partial B(n))\leq c_{11}\exp(-c_{12}x).
\end{equation}
Then by the Cauchy-Schwarz inequality, (\ref{e37}), (\ref{e49}) and
(\ref{e4}), we have
\begin{eqnarray*}
&&|E[\theta_n^2I_{\mathcal {A}_q}]-E[\theta_q^2I_{\mathcal
{A}_q}]|\\
&&~~~~=|2E[\theta_qI_{\mathcal {A}_q}(\theta_nI_{\mathcal
{A}_q}-\theta_qI_{\mathcal {A}_q})]+E[\theta_nI_{\mathcal
{A}_q}-\theta_qI_{\mathcal {A}_q}]^2|\\
&&~~~~\leq 2(E[\theta_qI_{\mathcal
{A}_q}]^2)^{1/2}(E[\theta_nI_{\mathcal {A}_q}-\theta_qI_{\mathcal
{A}_q}]^2)^{1/2}+E[\theta_nI_{\mathcal {A}_q}-\theta_qI_{\mathcal
{A}_q}]^2\\
&&~~~~\leq c_{13}(\log n)^{5/6}+c_{14}(\log n)^{2/3}.
\end{eqnarray*}
\end{proof}
\begin{proof}[Proof of Corollary \ref{c1}]
Let $10\leq n\leq m$ and $2^{q+q^{\frac{1}{3}}+1}\leq n\leq
2^{q+q^{\frac{1}{3}}+2}$.  By a slight modification of Proposition
3.6 in \cite{r4} and its proof (analog to the coupling result in
Lemma \ref{l2}), there exists a universal constant $c_1>0$, we can
couple $P(\cdot|\partial B(1)\leftrightarrow
\partial B(n))$ and $P(\cdot|\partial B(1)\leftrightarrow \partial
B(m))$ such that with probability at least
$1-\exp(-c_1q^{\frac{1}{3}})$, there exist identical exterior faces
$\Theta$ with quality $Q(\Theta)\geq \frac{1}{4}$ (well separated)
around $\partial B(2^{q+p})$ for some $0\leq p\leq q^{\frac{1}{3}}$
and identical configurations on $\overline{\Theta}$ for
$P(\cdot|\partial B(1)\leftrightarrow
\partial B(n))$ and $P(\cdot|\partial B(1)\leftrightarrow
\partial B(m))$. Let $\mathcal {A}$ denote the event that the above coupling succeeds and $P_{n,m}(\cdot)$ denote the coupling measure.
For a configuration of $P(\cdot|\partial B(1)\leftrightarrow
\partial B(m))$, we denote by $\theta_{n,m}$ the winding angle of the arm (chosen
uniquely by some definite way) connecting $\partial B(1)$ and
$\partial B(n)$.  Let $0<\varepsilon<\frac{1}{6}$. Define event
$$\mathcal {B}:=\{\mbox{there exist at least
$q^{\frac{1}{2}-\varepsilon}/(4\pi)-4$ disjoint black crossing paths
in $R(2^q,n)$}\}.$$ Then by the coupling argument we discuss above,
\begin{eqnarray*}
&&P_{n,m}(|\theta_{n,m}-\theta_{n,n}|>2q^{\frac{1}{2}-\varepsilon})\\
&&~~~~\leq P_{n,m}(\mathcal {A}^c)+P_{n,m}(\mathcal {A},|\theta_{n,m}-\theta_{n,n}|>2q^{\frac{1}{2}-\varepsilon})\\
&&~~~~\leq \exp(-c_1q^{\frac{1}{3}})+P(\mathcal {B}|\partial
B(1)\leftrightarrow_{\sigma}
\partial B(n))+P(\mathcal {B}|\partial B(1)\leftrightarrow_{\sigma}
\partial B(m)).
\end{eqnarray*}
Since
\begin{eqnarray*}
&&P(\mathcal {B}|\partial B(1)\leftrightarrow_{\sigma}
\partial B(m))=\frac{P(\mathcal {B},\partial B(1)\leftrightarrow_{\sigma}
\partial B(m))}{P(\partial B(1)\leftrightarrow_{\sigma}
\partial B(m))}\\
&&~~~~~~~~~~~~~~~~~~~~~~~~~~~~~~~~~~~~\leq \frac{c_2P(\mathcal
{B})}{P(\partial B(2^q)\leftrightarrow_{\sigma}
\partial B(n))}~~\mbox{by quasi-multiplicativity}\\
&&~~~~~~~~~~~~~~~~~~~~~~~~~~~~~~~~~~~~\leq
c_3\exp(-c_4q^{\frac{1}{2}-\varepsilon})~~\mbox{by Lemma \ref{l1}
and (\ref{e40})},
\end{eqnarray*}
then we get
$$P_{n,m}(|\theta_{n,m}-\theta_{n,n}|>q^{\frac{1}{2}-\varepsilon})=o(1)~~\mbox{as}~~q\rightarrow\infty.$$
Recall $a_n\asymp \sqrt{\log n}$, $q\asymp \log n$.  Using Theorem
\ref{t1}, under $P(\cdot|\partial B(1)\leftrightarrow
\partial B(m))$, let $m\geq n\rightarrow\infty$, we have
\begin{equation*}
\frac{\theta_{n,m}}{a_n}\rightarrow_d N(0,1).
\end{equation*}
By the definition of $\nu_{\sigma}$ (see (\ref{e20})), the
conclusion follows.
\end{proof}






\end{document}